\documentclass{article}

\usepackage[utf8]{inputenc}
\usepackage[T1]{fontenc}
\usepackage{authblk}
\usepackage{amsmath}
\usepackage{amssymb}
\usepackage{amsthm}
\usepackage{commath}
\usepackage{graphicx}
\usepackage{caption}
\usepackage{subcaption}
\usepackage{float}
\usepackage{algorithm}
\usepackage{algpseudocode}
\usepackage{makecell}
\usepackage[colorlinks=true, allcolors=black]{hyperref}
\usepackage[backend=biber,maxnames=3,url=false,
  style=numeric-comp,isbn=false,giveninits=true,eprint=false,
  sorting=none,doi=false,date=year]{biblatex}
\bibliography{paper_arxiv.bbl}

\hoffset=0.2in \voffset=0in \evensidemargin=0in \oddsidemargin=0in
\textwidth=6in \topmargin=0in \headheight=0.0in \headsep=0.0in
\textheight=9.1in

\DeclareMathOperator{\Rb}{\mathbb{R}}
\DeclareMathOperator{\Cb}{\mathbb{C}}
\DeclareMathOperator{\Ic}{\mathcal{I}}

\DeclareMathOperator*{\argmin}{argmin}
\DeclareMathOperator*{\argmax}{argmax}

\newtheorem{theorem}{Theorem}[section]
\newtheorem{remark}[theorem]{Remark}
\newtheorem{proposition}[theorem]{Proposition}
\newtheorem{assumptions}[theorem]{Assumptions}
\newtheorem{definition}[theorem]{Definition}


\title{\vspace{-2cm}Efficient high-resolution refinement in cryo-EM with stochastic gradient descent}

\author[1,2]{Bogdan Toader}
\author[3]{Marcus A. Brubaker}
\author[2]{Roy R. Lederman}

\affil[1]{MRC Laboratory of Molecular Biology}
\affil[2]{Yale University}
\affil[3]{York University}

\begin{document} 
\maketitle 

\begin{abstract}
Electron cryo-microscopy (cryo-EM) is an imaging technique widely used in structural biology to determine the three-dimensional structure of biological molecules from noisy two-dimensional projections with unknown orientations. As the typical pipeline involves processing large amounts of data, efficient algorithms are crucial for fast and reliable results. The stochastic gradient descent (SGD) algorithm has been used to improve the speed of ab initio reconstruction, which results in a first, low-resolution estimation of the volume representing the molecule of interest, but has yet to be applied successfully in the high-resolution regime, where expectation-maximization algorithms achieve state-of-the-art results, at a high computational cost. In this article, we investigate the conditioning of the optimization problem and show that the large condition number prevents the successful application of gradient descent-based methods at high resolution. Our results include a theoretical analysis of the condition number of the optimization problem in a simplified setting where the individual projection directions are known, an algorithm based on computing a diagonal preconditioner using Hutchinson’s diagonal estimator, and numerical experiments showing the improvement in the convergence speed when using the estimated preconditioner with SGD. The preconditioned SGD approach can potentially enable a simple and unified approach to ab initio reconstruction and high-resolution refinement with faster convergence speed and higher flexibility, and our results are a promising step in this direction.
\end{abstract}

\section{Introduction}
We consider the problem of gradient-based optimization for tomographic reconstruction with a particular focus on electron cryo-microscopy (cryo-EM).
Stochastic gradient descent (SGD) based optimization methods have become a standard algorithmic framework in many areas due to their speed, robustness, flexibility and ease of implementation, particularly with the availability of fast and robust automatic differentiation libraries.
However, in cryo-EM, SGD methods have only really found use in ab initio structure determination, where the goal is only a low-resolution structure.
High-resolution structures are then determined by switching to a different form of optimization, typically some form of expectation-maximization (EM), despite the fact that these methods are often slow and require specific modeling assumptions.
A natural question which we address here then is why stochastic gradient optimization techniques have not been able to solve the high-resolution optimization problem.
Doing so could further speed up data processing for cryo-EM, simplify workflows and unify open research questions.
However, perhaps more significantly, using SGD-based optimization methods would allow for more flexibility in modeling the reconstruction problem.
Common modeling assumptions (e.g., Gaussian noise, Gaussian priors, rigid structures,  discrete Fourier-based structure representations) which may not be optimal but are required by current refinement methods could be relaxed with SGD methods.

In this paper, we show that standard SGD-based methods struggle to accurately determine high-resolution information in cryo-EM due to fundamental properties of the induced optimization problem.
We perform a theoretical analysis, in a simplified setting, which shows that the induced optimization problem can suffer from ill-conditioning, which results in arbitrarily slow convergence for standard SGD algorithms.
Moreover, our analysis shows that the conditioning may be acceptable at low resolution but becomes worse as resolution increases, explaining why SGD has been successful in the ab initio setting but has yet to be successfully used for high-resolution refinement.
While our analysis is in a simplified setting, we argue and empirically verify that this poor conditioning behavior continues to exist in real-world cryo-EM settings.
Finally, based on our analysis, we propose a new SGD-based algorithm which, unlike standard methods, exploits higher-order derivatives to improve the conditioning of the problem.
Our results demonstrate that this new method is able to overcome the conditioning problem and efficiently and accurately determine high-frequency information.

\subsection{Background}
Cryo-EM enables biologists to analyze the structure of macromolecules in their native state.
In comparison with the older method of X-ray crystallography, cryo-EM does not require crystallized samples, allowing one to study larger molecules with complex structure and conformations.
Indeed, its potential to uncover the structure and function of macromolecules has been acknowledged by the scientific community: cryo-EM was named the ``Method of the year'' in 2015 by Nature Methods~\cite{noauthor_method_2016}, and its development was the subject of the 2017 Nobel Prize in Chemistry.

The standard cryo-EM single particle analysis (SPA) pipeline involves freezing  a biological sample in a thin layer of ice and imaging it with an electron microscope.
The imaged sample contains multiple copies of a macromolecule captured in distinct,  random and unknown orientations.
Following a number of data processing steps, two-dimensional projections of the electrostatic potential of the macromolecule are captured in a stack of images, which we refer to as the particle images. 
The goal of the cryo-EM SPA pipeline is to reconstruct a three-dimensional volume representing the structure of a molecule from the collected particle images.

In addition to the random orientations, each projection of the volume is shifted off the center of the image by a small, unknown amount, and the particle images are further blurred by a contrast transfer function (CTF) which is image-specific and depends on the settings of the microscope.
Moreover, to avoid the damaging of the biological sample by the electron beam, the imaging is done at a low dosage, which results in a poor signal-to-noise ratio (SNR).
Therefore, cryo-EM reconstruction requires solving a tomography problem with unknown viewing directions and in-plane shifts, and low SNR.
Here we refer to the particle orientations and the in-plane shifts collectively as the pose variables.

Cryo-EM reconstruction approaches implemented in established software packages~\cite{scheres_relion_2012,punjani_cryosparc_2017} consist of two separate stages: ab initio reconstruction and high-resolution 3D refinement. 
Ab initio reconstruction provides an initial, low-resolution estimation of the volume. 
This is a non-convex problem for which many methods have been developed, but the current state-of-the-art methods are based on stochastic gradient descent algorithms which were first used in the context of cryo-EM in the cryoSPARC software~\cite{punjani_building_2017,punjani_cryosparc_2017}. 
More recently, a similar approach has been adopted by other software packages such as RELION~\cite{kimanius_new_2021}.
After the ab initio step, high-resolution 3D refinement performs further optimization of the volume and generates a high-resolution reconstruction.
This usually employs an optimization algorithm such as 
expectation-maximization~\cite{dempster_maximum_1977} to iteratively reconstruct 
the volume and a search procedure to estimate the pose variables.
The EM algorithm has become a standard approach to high-resolution refinement~\cite{tagare_fast_2008,scheres_bayesian_2012,scheres_relion_2012,punjani_cryosparc_2017} and achieves state of the art reconstructions.
However, EM-based methods are computationally expensive, generally requiring full passes through all images at each iteration of refinement and necessitating sophisticated grid and tree search algorithms to reduce the computational costs.
Further, they are restricted to several key assumptions including Gaussian noise, a Gaussian prior on structures, a rigid structure, and a discrete Fourier-based representation of the structure.

Recently, a new class of methods for cryo-EM have emerged which aim to reconstruct not just static structures but also conformational heterogeneity, where the reconstructed 
volume is subject to different kinds of deformations~\cite{toader_methods_2023,lederman_hyper-molecules_2020,zhong_cryodrgn_2021,zhong_cryodrgn2_2021,levy_amortized_2022,chen_deep_2021,chen_integrating_2023,chen_improving_2023,punjani_3dva_2021,punjani_3dflex_2023,gupta_cryogan_2021,gupta_multi-cryogan_2020,herreros_zart_2023,esteve-yague_spectral_2023,schwab_dynamight_2024,li_cryostar_2024,gilles_cryo-em_2025,woollard_instamap_2025}.
Such methods greatly expand the capabilities of cryo-EM, e.g., with time-resolved cyro-EM \cite{torino_time-resolved_2023,bhattacharjee_time_2024}.
However, existing methods usually require a high resolution structure and known 
pose variables as input, limiting their applicability.
Moreover, they cannot use the standard EM-based optimization approaches, often using 
neural networks trained using SGD instead, and generally fail to match existing 
rigid structure refinement approaches in resolution and requiring substantially 
more computation. While motivated by the promise of capturing conformational heterogeneity, 
we focus here on the static reconstruction case.
Our results suggest that improving the performance of SGD-based methods may be 
the key to unlocking this new capability of cryo-EM.

\subsection{Comparison to prior work}
The Bayesian formulation of the cryo-EM reconstruction problem and its solution via the EM algorithm has a long history, with early works including \cite{sigworth_maximum-likelihood_1998,tagare_fast_2008,tagare_adaptive_2010} as well as their implementation for high-resolution 3D refinement in state-of-the-art software such as RELION~\cite{scheres_relion_2012,scheres_bayesian_2012} and cryoSPARC~\cite{punjani_cryosparc_2017}.
While early ab initio reconstruction methods involved heuristic approaches such as using a low-pass filtered known reconstruction of a similar structure to the one of interest, mathematically rigorous approaches based on the method of common lines have been developed in \cite{singer_three-dimensional_2011,shkolnisky_viewing_2012,greenberg_common_2017}.
The SGD algorithm introduced in a cryo-EM context in \cite{punjani_building_2017,punjani_cryosparc_2017} showed improved robustness and speed in obtaining ab initio reconstructions from scratch.
More recently, the VDAM algorithm, a gradient-based algorithm with adaptive learning
rate has been introduced in the latest version of the RELION software~\cite{kimanius_new_2021}.
This brief list of cryo-EM reconstruction algorithms is non-exhaustive and we refer the reader to more comprehensive review articles such as~\cite{singer_computational_2020,bendory_single-particle_2020}.

The aforementioned articles view the SGD algorithm and its variants as tools for the ab initio step, while the best results for high-resolution refinement are achieved using the EM algorithm.
In this work, we present an analysis of the conditioning of the reconstruction optimization problem and propose a method to improve the convergence of SGD for high-resolution refinement by on-the-fly estimation of a suitable preconditioner.
While basic preconditioners are used in cryoSPARC~\cite{punjani_cryosparc_2017} and several strategies for adapting the step size based on second order information are implemented in the VDAM algorithm~\cite{kimanius_new_2021}, neither work addresses the conditioning of the problem explicitly, and the preconditioners used are not suitable in the high-resolution regime.
In contrast, while we theoretically analyze the reconstruction problem in a simplified setting, our proposed solution is specifically designed to overcome the conditioning issue in a big data, high-resolution setting. 
We leverage ideas from the machine learning literature~\cite{vaswani_adaptive_2021,jahani_doubly_2022} to estimate a preconditioner that poses no significant additional computational cost over the ordinary SGD algorithm, does not require a particular initialization or warm start, and is straightforward to incorporate into existing SGD implementations in cryo-EM frameworks.
Finally, we provide numerical experiments that show the feasibility of our preconditioning approach for estimating high resolution information. 

\subsection{Outline}

The remaining parts of the article are structured as follows. 
In Section~\ref{sec:preliminaries}, we describe the mathematical setting of the 
cryo-EM reconstruction problem, as well as current approaches for high-resolution 
refinement using the EM algorithm and ab initio reconstruction using the
SGD algorithm. Section~\ref{sec:main} presents the main contributions. 
In Section~\ref{sec:cond}, we analyze the condition number of the linear inverse
problem in the simplified setting where the pose variables are known, while
in Sections~\ref{sec:hutch}-\ref{sec:algorithm} we describe several ideas 
that are part of a proposed construction of a preconditioner that allows SGD
to overcome the conditioning issue. In Section~\ref{sec:experiments}, we provide 
numerical experiments that validate the theoretical contributions, and in 
Section~\ref{sec:conclusion} we conclude with a summary of the main advantages 
of the proposed approach and motivate a potential extension of this work to the
fully general setting.

\section{Preliminaries}
\label{sec:preliminaries}

\subsection{Forward model}

The objective of cryo-EM reconstruction is to estimate a three-dimensional volume 
representing the shape of a molecule $v$ from a stack of particle images 
$\{x_i\}_{i=1}^N$. 
A simple and frequently used model of the image formation process consists of the
following steps: each particle image $x_i$ is formed by rotating the volume $v$ 
by a three-dimensional rotation operator $R_i$, projecting it along the $z$-axis, 
applying a small in-plane shift $T_i$, convolving the result with a contrast
transfer function (CTF) $C_i$, and adding Gaussian noise.

This model is often formulated in the Fourier domain to speed up the computation
of the projections by taking advantage of the Fourier slice theorem and the fast 
Fourier transform (FFT). The Fourier slice theorem states that the two-dimensional 
Fourier transform of a projection of a three-dimensional volume is the intersection 
of the three-dimensional Fourier transform of the volume with a plane passing through 
the origin of the coordinate axes, where the projection direction is determined 
by the orientation of the plane.

Let $v \in \Cb^{M_v}$ be the (discretized) three-dimensional volume and 
$x_i \in \Cb^{M_x}$, for $i=1,\ldots,N$, the particle images. Here, $M_v$ is the 
total number of voxels in a $M \times M \times M$ grid discretization of the volume 
and, similarly, $M_x$ is the total number of pixels in a $M \times M$ grid discretization 
of the particle images\footnote{
   Throughout this article, we will use the words ``voxel'' and ``pixel'' to refer
   to the elements of a discretized volume and image respectively, in the
   Fourier domain.
}. Without loss of generality, we assume that in this 
vectorized representation of the volume, the first $M_x = M \times M$
elements of the volume correspond to the volume slice at $z=0$ in the Fourier domain.
Moreover, note that in this representation, both the CTF $C_i$ and the in-plane
shift operator $T_i$ are diagonal matrices in $\Cb^{M_x \times M_x}$, since in 
the Fourier domain, the convolution corresponds to element-wise multiplication 
and the in-plane shift corresponds to a phase factor.
Since the volume and images are defined on discrete grids, 
the rotated volume and the initial grid coordinates are no longer aligned. 
Specifically, the value of a volume $v$ acted on by a rotation operator $R$ at
the grid coordinates $r$ is given by evaluating the volume $v$ at the rotated
coordinates $R^T r$: $(R v)(r) = v(R^T r)$. However, due to the possible misalignment
between the coordinate grid that $v$ is defined on and the rotated coordinates 
$R^T r$, the value of $v$ at $R^T r$ must be interpolated using its values at the
neighboring grid points. 

We define two projection operators that use nearest-neighbor
and trilinear interpolation, respectively. In short, projection by nearest-neighbor 
interpolation assigns to the rotated grid point the value of the volume at the 
grid point closest to the rotated grid point, while projection by trilinear 
interpolation performs linear interpolation using the closest eight grid points 
to the rotated grid point and assigns the resulting value to it, as follows:
\begin{definition}[Projection operator $P_{\phi}$]
    \label{def:proj}
    Let $\phi_i \in SO(3) \times \Cb^2$ denote the pose variable encapsulating the
    rotation matrix $ R \in \Rb^{3 \times 3} $ 
    and the diagonal shift matrix $T \in \Cb^{M_x \times M_x}$, 
    $\{r_j^{2D}\}_{j=1}^{M_x}$ the coordinates of the Fourier grid at the $z=0$ plane,
    and $\{r_k^{3D}\}_{k=1}^{M_v}$ the coordinates of the 3D Fourier grid that the volume $v$ is defined on.
    We define the following projection operators $P_{\phi} \in \Cb^{M_x \times M_v}$:
    \begin{enumerate}
        \item \textbf{Nearest-neighbor interpolation projection operator $P^{nn}_{\phi}$}:
            \begin{equation} 
                (P^{nn}_{\phi} v)_j := T_{jj} v_{k^*(j)},
                \qquad \text{for all} \quad j = 1,\ldots, M_x,
                \label{eq:def nn interp}
            \end{equation}
            where $k^*(j) = \argmin_{k \in \{1,\ldots,M_v\}} \|r_k^{3D} - R^T r_j^{2D}\|_2 $,
            In this case, the matrix $P^{nn}_{\phi}$ has exactly one non-zero 
            element in each row, and therefore 
            ${(P^{nn}_{\phi})}^* P^{nn}_{\phi} \in \Rb^{M_v \times M_v}$ 
            is diagonal.
        \item \textbf{Trilinear interpolation projection operator $P^{tri}_{\phi}$}
            \begin{equation}
                (P^{tri}_{\phi} v)_j :=  T_{jj} \sum_{i=1}^8  c_{k_i(j)} v_{k_i(j)},
                \qquad \text{for all} \quad j=1,\ldots,M_x. 
                \label{eq:def tri interp}
            \end{equation}
            where $ k_1(j), \ldots, k_8(j) $ are the indices of the eight closest
            3D grid points to the rotated 2D grid point $R^T r_j^{2D}$ and
            $ c_{k_1(j)}, \ldots, c_{k_8(j)} $ 
            are the weights of the trilinear interpolation of the volume $v$
            at $R^T r_j^{2D}$ on these eight grid points.
    \end{enumerate}
\end{definition}
As the nearest-neighbor interpolation operator is a diagonal matrix, we will use 
it to establish theoretical results regarding the conditioning of the reconstruction
problem. However, the trilinear interpolation operator is more common in practice, 
and therefore we will show in numerical experiments that our preconditioner estimation 
method applies to this case as well.

\begin{remark}
    Since the nearest-neighbor projection operator $P^{nn}_{\phi}$ corresponds 
    to a plane slice through the volume passing through the center of the 
    coordinate axes that is approximated by the closest grid points to the plane,
    it follows from~\eqref{eq:def nn interp} that the indices of the non-zero 
    elements in the diagonal matrix $(P^{nn}_{\phi})^* P^{nn}_{\phi} \in \Rb^{M_v \times M_v}$ 
    are the indices of these nearest-neighbor elements in the vectorized 
    representation of the volume $v \in \Cb^{M_v}$.
    Moreover, the non-zero (diagonal) elements of $(P^{nn}_{\phi})^* P^{nn}_{\phi}$ are real 
    and positive, as the shift matrix $T$ is complex diagonal with elements of 
    unit absolute value.
    \label{rem:diag nn}
\end{remark}

\begin{remark}
    An alternative approach to the interpolation given in Definition~\ref{def:proj}
    is to sample the volume on the rotated grid using non-uniform 
    FFT~\cite{dutt_fast_1993, greengard_accelerating_2004,barnett_parallel_2019} 
    as done, for example, in~\cite{wang_fourier-based_2013}. In~\cite{anden_structural_2018},
    the structure of the matrix involving a projection and a backprojection is
    leveraged to obtain a fast preconditioner based on a circular convolution~\cite{tyrtyshnikov_optimal_1992}.
    However, this setup is less suitable for our problem where the goal is to use stochastic gradient descent to solve the general volume reconstruction problem with unknown poses. 
\end{remark}

Given the projection operators defined above, we can state the forward model as:
\begin{equation}
    x_i = C_i P_{\phi_i}v + \eta_i,
    \qquad i=1,\ldots,N,
    \label{eq:fwd model}
\end{equation}
where $P_{\phi_i} $ is one of the two projection operators in Definition~\ref{def:proj},
$C_i \in \Cb^{M_x \times M_x}$ is a diagonal matrix corresponding to the CTF,
and $\eta_i$ is the noise vector in the $i$-th image, with elements complex
normally distributed with variance $\sigma^2$. 
Both the rotation $R_i$ and the shift $T_i$ are specific to each image $x_i$ and 
not known. The CTF model is the same across all images, with image-specific parameters 
(e.g., defocus). We assume that the CTF model is known and its parameters have been
estimated in advance, and that the noise variance $\sigma^2$ is constant across 
particle images and pixels and has also been estimated in advance.

\subsection{Bayesian formulation and the EM algorithm}

The standard approach to high-resolution 3D refinement in cryo-EM is to solve a Bayesian 
formulation of the volume reconstruction problem with marginalization over the 
pose variables. This is solved using the expectation-maximization 
algorithm~\cite{dempster_maximum_1977}, which has first been used in 
the context of cryo-EM for aligning and denoising particle images 
in~\cite{sigworth_maximum-likelihood_1998} and further refined 
for full volume reconstruction in the software packages RELION~\cite{scheres_relion_2012}
and cryoSPARC~\cite{punjani_cryosparc_2017}.

Computing the full posterior distribution of the volume $v$ given the particle 
images $\{x_i\}_{i=1}^N$ is computationally expensive. 
Instead, the maximum-a-posteriori (MAP) problem involves computing the volume $v$ 
that maximizes the (log) posterior:
\begin{align}
    \argmax_{v\in\Cb^{M_v}} \log p(v | x_1,\ldots,x_N) 
    &= \argmax_{v\in\Cb^{M_v}} \log p(x_1,\ldots,x_N | v) + \log p(v) \nonumber \\
    &= \argmax_{v\in\Cb^{M_v}} \sum_{i=1}^N \log p(x_i|v) + \log p(v),
    \label{eq:posterior}
\end{align}
where $p(v)$ is the prior distribution of the volume $v$ and $p(x_i | v)$ is the
likelihood probability of observing the image $x_i$ given the volume $v$:
\begin{equation}
    p(x_i | v) \propto 
    \int_{SO(3)\times\Rb^2} p(x_i | \phi_i, v) p(\phi_i) \dif \phi_i, 
    \label{eq:likelihood}
\end{equation}
where $p(\phi_i)$ is the prior distribution of the pose variable $\phi_i$ of the 
particle image $x_i$ and $p(x_i|\phi_i,v)$ is the likelihood of observing 
image $x_i$ given the volume $v$ and pose $\phi_i$:
\begin{equation*}
    p(x_i|\phi_i,v) \propto \exp\left(
        -\frac{\|x_i - C_i P_{\phi_i}v\|_2^2}{2\sigma^2}
    \right).
\end{equation*}
The volume prior $p(v)$ plays a regularization role 
in solving the reconstruction problem and, throughout this manuscript, we assume 
it is Gaussian with variance $\tau^2$ for all elements of the volume:
\begin{equation*}
    p(v) \propto e^{-\frac{\|v\|_2^2}{2\tau^2}}.
\end{equation*}

The EM algorithm finds the optimal volume $v^*$ in~\eqref{eq:posterior} by alternating 
between two steps at each iteration. The fist step computes the expected value of 
the posterior of the pose variables given the current estimate 
of the volume $v^{(k)}$ (the E step), while the second step computes a new 
iterate for the volume $v^{(k+1)}$ that maximizes this quantity (the M step).
If the prior is Gaussian, the $v^{(k+1)}$ can be computed analytically by letting
the derivatives be equal to zero. 
The resulting algorithm performs the following update at iteration $k$:
\begin{subequations}
\begin{align}
    \text{E step:} &\qquad
    \Gamma_{i}^{(k)}(\phi) = p(\phi_i = \phi | v^{(k)}, x_1, \ldots, x_N)
    =\frac{
        p(x_i | \phi, v^{(k)}) p(\phi)
    }{
        \int_{\phi_l} p(x_i | \phi_l, v^{(k)}) p(\phi_l) \dif \phi_l
    },
    \label{eq:em gamma}
    \\
    \text{M step:} &\qquad
    v^{(k+1)} = 
    \left(
        \sum_{i=1}^N \int_{\phi} \Gamma_{i}^{(k)}(\phi)
        P_{\phi}^* C_i^2 P_{\phi} \dif\phi
        + \frac{\sigma^2}{\tau^2} I_{M_v}
    \right)^{-1}
    \left(
        \sum_{i=1}^N \int_{\phi} \Gamma_{i}^{(k)}(\phi)
        P_{\phi}^* C_i x_i \dif\phi
    \right),
    \label{eq:em v}
\end{align}
\end{subequations}
where $I_{M_v}$ is the $M_v \times M_v$ identity matrix.

In more general implementations of the EM algorithm in software packages such 
as RELION and cryoSPARC, the noise variance $\sigma^2$ and the prior variance 
$\tau^2$ are not constant across image pixels and volume voxels respectively, and 
are both estimated at each iteration using the current volume iterate $v^{(k)}$ 
and weights $\Gamma_i^{(k)}$.

Since the optimization landscape is non-convex, the EM algorithm can converge to 
a local maximum different from the MAP estimator~\cite{dempster_maximum_1977}, 
so a good initialization is required.
Therefore, it is best used for high-resolution refinement, where an initial volume 
and possibly priors for the pose variables are provided by the ab initio algorithm. 
However, the E step is particularly expensive at high resolution, as it requires 
integrating over the entire space of the pose variable $\phi_i$ for each particle 
image in the dataset. 
In practice, this is performed by employing efficient gridding and search techniques.

\subsection{Stochastic gradient descent for ab initio reconstruction}

The SGD algorithm has become the preferred method
for solving the MAP problem~\eqref{eq:posterior} in the context of ab initio
reconstruction. It was first used in~\cite{punjani_cryosparc_2017} to obtain an initial
volume reliably and without requiring a good initialization, as it is the case
for EM. The key property that SGD exploits is that the objective function 
in~\eqref{eq:posterior} can be split into separate terms for each particle image,
not unlike the loss functions used in the training of deep neural 
networks. In particular, 
the objective function~\eqref{eq:posterior} can be written as:
\begin{equation}
    f(v) = \frac{1}{N}\sum_{i=1}^N f_i(v),
    \label{eq:f sgd}
\end{equation}
where each term $f_i$ corresponds to a particle image:
\begin{equation*}
    f_i(v) = \log p(x_i|v) + \frac{1}{N} \log p(v)
\end{equation*}
At iteration $k$, SGD performs the following update:
\begin{equation}
    v^{(k+1)} = v^{(k)} - \eta_k \frac{\dif f_j(v^{(k)})}{\dif v^{(k)}},
    \label{eq:sgd iter}
\end{equation}
where $\eta_k$ is the step size at iteration $k$ and the index $j$ of the term 
(and particle image) used to compute the gradient is chosen uniformly at random. 

In practice, a mini-batch of particle images, rather than a single image, is used 
to compute the gradient at each iteration. The volume update step~\eqref{eq:sgd iter}
for the current mini-batch is performed after a refinement step of the pose 
distribution of each particle image in the current mini-batch given the current 
volume and data $p(\phi_i | v^{(k)}, x_1,\ldots,x_N)$, for example by 
using~\eqref{eq:em gamma} similarly to the EM algorithm. Various techniques can 
be used to speed up the computation time, for example the grid refinement 
implemented in RELION~\cite{scheres_relion_2012} or the branch-and-bound approach 
in cryoSPARC~\cite{punjani_cryosparc_2017}.

The SGD algorithm has two major advantages over EM. First, it has a lower computational 
cost per iteration, as it only uses a subset of the dataset, while EM requires 
a pass through the entire dataset. While computing the integrals in~\eqref{eq:likelihood} 
for the images in the mini-batch cannot be avoided, the gridding and searching 
approaches that EM uses to efficiently sample the space of poses are also beneficial 
in the implementation of SGD.
Second, it has been observed that the noise in the sampled gradient 
enables SGD to explore the optimization landscape more efficiently, 
preventing it from 
becoming stuck in unwanted local minima~\cite{keskar_large-batch_2017, smith_bayesian_2018}.
Given these advantages, the SGD algorithm is particularly well-suited for ab initio 
reconstruction, as it has been shown in practice in cryoSPARC~\cite{punjani_cryosparc_2017}.
More recently, the gradient-based algorithm VDAM has been introduced for ab initio
reconstruction in the RELION software~\cite{kimanius_new_2021}. However, despite 
the clear benefits in terms of computational cost and convergence speed of stochastic 
gradient algorithms compared to EM for (low resolution) ab initio reconstruction, 
EM is still the state-of-the-art approach for high-resolution refinement. 

To investigate how the performance of SGD can be improved for high-resolution
cryo-EM reconstruction, we first state a convergence result from~\cite{schmidt_fast_2013}:
\begin{theorem}[Convergence of the stochastic gradient descent algorithm]
    Suppose that $f: \Rb^M \to \Rb$ in~\eqref{eq:f sgd} is twice-differentiable
    and strongly convex and its gradient is Lipschitz-continuous with constant $L$.
    Furthermore, we assume that there exists $B \geq 1$ such that:
    \begin{equation}
        \max_i \{ \|\nabla f_i(x) \| \} \leq B \| \nabla f(x) \|.
        \label{eq:strong growth cond}
    \end{equation}
    Let $v^*$ be the minimizer of $f$ and $v^{(k)}$ the iterate generated
    by the SGD iteration~\eqref{eq:sgd iter} with fixed step size $\eta_k = \frac{1}{LB^2}$. 
    Then:
    $$
    \mathbb{E}\left[ f(v^{(k)}) \right] - f(v^*) \leq
    \left(1 - \frac{1}{\kappa(\nabla^2 f(v^*)) B^2} \right)^k
    \left[ f(v^{(0)}) - f(v^*)  \right],
    $$
    where $ \kappa(\nabla^2 f(v^*)) = \frac{\max_j \lambda_j}{\min_j \lambda_j}$
    is the condition number of the Hessian matrix $\nabla^2 f(v^*)$. 
    \label{thm:sgd convergence}
\end{theorem}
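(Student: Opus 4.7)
The plan is to follow the standard template for analyzing SGD under the strong growth condition~\eqref{eq:strong growth cond}, namely: start from the $L$-smoothness upper bound on $f(v^{(k+1)})$, take conditional expectation over the random index to cancel the noise term against the true gradient, use the growth condition to bound the stochastic gradient's second moment, and finally apply strong convexity (a Polyak--{\L}ojasiewicz-type inequality) to turn a bound on $\|\nabla f(v^{(k)})\|^2$ into linear contraction of the suboptimality gap.

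Concretely, first I would apply the descent lemma that follows from $L$-Lipschitz gradient,
\begin{equation*}
    f(v^{(k+1)}) \leq f(v^{(k)}) + \langle \nabla f(v^{(k)}),\, v^{(k+1)} - v^{(k)}\rangle + \frac{L}{2}\|v^{(k+1)} - v^{(k)}\|_2^2,
\end{equation*}
and substitute the SGD update~\eqref{eq:sgd iter}. Taking expectation over the uniformly random index $j$ conditional on $v^{(k)}$, the cross term collapses because $\mathbb{E}_j[\nabla f_j(v^{(k)})] = \nabla f(v^{(k)})$, producing
\begin{equation*}
    \mathbb{E}_j\bigl[f(v^{(k+1)})\bigr] \leq f(v^{(k)}) - \eta_k \|\nabla f(v^{(k)})\|_2^2 + \frac{L \eta_k^2}{2} \mathbb{E}_j\bigl[\|\nabla f_j(v^{(k)})\|_2^2\bigr].
\end{equation*}
Next I would invoke~\eqref{eq:strong growth cond}, which upper-bounds $\mathbb{E}_j\bigl[\|\nabla f_j(v^{(k)})\|_2^2\bigr]$ by $B^2 \|\nabla f(v^{(k)})\|_2^2$; plugging in the prescribed step size $\eta_k = 1/(LB^2)$ collapses the two terms on the right into $-\frac{1}{2LB^2}\|\nabla f(v^{(k)})\|_2^2$.

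The final step is to convert this gradient-norm contraction into a functional contraction. Using strong convexity with modulus $\mu := \min_j \lambda_j(\nabla^2 f(v^*))$ (which, because the cryo-EM objective with known poses is essentially quadratic, really is the global strong-convexity constant, so the Hessian at $v^*$ governs the rate), the Polyak--{\L}ojasiewicz inequality gives
\begin{equation*}
    \|\nabla f(v^{(k)})\|_2^2 \;\geq\; 2\mu\bigl(f(v^{(k)}) - f(v^*)\bigr).
\end{equation*}
Combining this with the previous display and using $L = \max_j \lambda_j(\nabla^2 f(v^*))$ yields
\begin{equation*}
    \mathbb{E}_j\bigl[f(v^{(k+1)})\bigr] - f(v^*) \;\leq\; \left(1 - \frac{\mu}{LB^2}\right)\bigl(f(v^{(k)}) - f(v^*)\bigr) = \left(1 - \frac{1}{\kappa(\nabla^2 f(v^*)) B^2}\right)\bigl(f(v^{(k)}) - f(v^*)\bigr).
\end{equation*}
Taking total expectation and iterating over $k$ produces the claimed bound.

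The main obstacle is the subtle identification of $L$ and $\mu$ with the extreme eigenvalues of $\nabla^2 f(v^*)$ rather than with a priori global Lipschitz/strong-convexity constants, since the theorem's contraction factor is stated in terms of $\kappa(\nabla^2 f(v^*))$. For a genuinely quadratic objective (as in the simplified cryo-EM setting with known poses) this is automatic because $\nabla^2 f$ is constant, but in general one must argue that the tightest usable constants are exactly those at the optimum; I would handle this by either restricting to the quadratic case that the paper already works in, or by noting that the analysis only needs the descent lemma and PL inequality to hold along the SGD trajectory, where local Hessian control near $v^*$ suffices.
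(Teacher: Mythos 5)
Your argument is correct and is essentially the standard proof of this result; note that the paper itself does not prove this theorem but quotes it from the cited reference (Schmidt and Le Roux, 2013), whose proof follows exactly your template of descent lemma, unbiasedness of the sampled gradient, the strong growth condition, and the Polyak--{\L}ojasiewicz inequality. You also correctly identify and resolve the one genuine subtlety --- that the generic argument yields the rate $1-\mu/(LB^2)$ with global smoothness and strong-convexity constants, which coincides with the stated $1-1/(\kappa(\nabla^2 f(v^*))B^2)$ only when, as in the paper's quadratic setting with known poses, the Hessian is constant so that $L$ and $\mu$ are its extreme eigenvalues.
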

The main implication of Theorem~\ref{thm:sgd convergence} is that the rate of 
convergence of SGD is affected negatively by a large condition number of the 
Hessian matrix $\kappa(\nabla^2 f(v^*))$.
In particular, we can derive that, 
to reach objective function error 
$\epsilon = \mathbb{E}\left[ f(v^{(k)}) \right] - f(v^*)$,
at least $k^*$ iterations are required, where:
\begin{equation}
    k^* = \mathcal{O}\left(
        \kappa(\nabla^2 f(v^*)) B^2 \log\frac{1}{\epsilon}
    \right).
    \label{eq:min iters}
\end{equation}
Put informally, this means that the number of iterations required to reach a certain accuracy $\epsilon$ scales with the condition number of the Hessian or, in other words, the impact of a condition number equal to $C > 1$ is that $C$ times more iterations are required to reach the same accuracy $\epsilon$ compared to the case where the condition number is 1.
Note that we chose this particular result for its simplicity, despite the rather
strong condition~\eqref{eq:strong growth cond}. More general results for the convergence 
of SGD where this condition is relaxed can be found, for example,  
in~\cite{moulines_non-asymptotic_2011, needell_stochastic_2016, gower_sgd_2019}.

In general, one can apply a preconditioner to improve the convergence of SGD. 
Preconditioning linearly reparameterizes the optimization problem to change its condition number without changing its solution in order to improve numerical behaviour and convergence dynamics.
In particular, we will precondition the optimization problem using an approximation of the diagonal of the Hessian matrix of the loss function, which we will define precisely in the next section.
Such an approximation of the Hessian diagonal will be obtained using 
Hutchinson's diagonal estimator~\cite{bekas_estimator_2007}, which allows one to 
compute (an approximation of) the diagonal of a matrix $H$ only using 
matrix-vector products, without forming the full matrix $H$.
This is achieved by computing the expectation:
\begin{equation}
    D = \mathbb{E}[z \odot Hz],
    \label{eq:hutch}
\end{equation}
where $z \in \mathbb{R}^{M_v}$ is a vector with elements drawn from a Rademacher(0.5) 
distribution (the elements of $z$ are $1$ or $-1$ with equal probability) and 
$\odot$ denotes element-wise multiplication. 

The central claim of this article is that the condition number of the Hessian of 
the loss function in the cryo-EM reconstruction problem scales with 
the target resolution of the reconstruction, and slows the convergence 
of SGD for high resolution refinement. In the next section, we argue that this is 
indeed the case and we propose a solution for overcoming this issue in a simplified setting.

\section{SGD for high-resolution refinement: fixed pose variables}
\label{sec:main}

In this section, we study the optimization problem~\eqref{eq:posterior} in the
setting where the pose variables (the three-dimensional orientations and the in-plane 
shifts of the particles) are known. While this is a simpler problem that can be 
solved with other methods, it captures the main difficulty that makes the application 
of gradient-based algorithms non-trivial at high resolution, namely the large condition 
number of the Hessian. Therefore, analyzing the reconstruction problem with the 
known pose variables provides useful insights and directions for approaching the 
problem in its full generality.

For simplicity and without loss of generality, we assume, like in the previous section, 
that the variance of the noise $\sigma^2$ and the variance of the prior $\tau^2$ 
are fixed and given in advance, and are constant across all images and pixels 
(in the case of $\sigma^2$) and across all voxels of the volume (in the case of $\tau^2$). 
The analysis presented in this section can be generalized to the case where
$\sigma$ and $\tau$ are not constant, and the preconditioner estimation we propose 
has the flexibility to incorporate existing methods for determining
these parameters in the reconstruction process. To simplify the notation, 
we collect these two parameters in one parameter $\lambda = \frac{\sigma^2}{\tau^2}$, 
which we will refer to as the regularization parameter.

\subsection{Condition number}
\label{sec:cond}

Having access to the true pose variable $\phi_i^*$ for each image $x_i$ is 
equivalent to taking the prior distribution for the pose variable $\phi_i$ 
in~\eqref{eq:likelihood} to be a Dirac delta distribution centered at the true 
value $\phi_i^*$, namely $p(\phi_i) = \delta_{\phi_i^*}$. 
In addition, we assume for simplicity that the images are not deformed by the CTF: 
$C_i = I_{M_x}$, for all $i=1,\ldots,N$. 
With the regularization parameter $\lambda$ described above and letting  $P_i := P_{\phi_i^*}$, 
we write the optimization problem~\eqref{eq:posterior} as:
\begin{equation}
    \argmin_{v\in\Cb^{M_v}} \frac{1}{2} \sum_{i=1}^N
    \|x_i - P_i v\|_2^2 + \frac{\lambda}{2} \|v\|_2^2,
    \label{eq:optim cp}
\end{equation}
Letting $f(v)$ be the objective function in~\eqref{eq:optim cp} and $f_i(v)$
the $i$-th term: 
\begin{equation*}
    f_i(v) := \frac12 \|x_i - P_i v\|_2^2 + \frac{\lambda}{2N} \|v\|_2^2,
\end{equation*}
the optimization problem~\eqref{eq:optim cp} becomes:
\begin{equation}
    \argmin_{v\in\Cb^{M_v}} f(v) = 
    \argmin_{v\in\Cb^{M_v}} \sum_{i=1}^N f_i(v),
    \label{eq:min prob}
\end{equation}
The minimizer of~\eqref{eq:min prob} is the point $v^* \in \Cb^{M_v}$ that satisfies: 
\begin{equation}
    Hv^* = b,
    \label{eq:lin sys}
\end{equation}
where $b =  \sum_{i=1}^N P_i^* x_i$ and $H \in \mathbb{R}^{M_v \times M_v}$ with:
\begin{equation}
    H = \nabla^2 f =  \sum_{i=1}^N {P_i}^*P_i + \lambda I_{M_v}.
    \label{eq:hessian}
\end{equation}
Note that, for the problem~\eqref{eq:optim cp}, $H$ is the Hessian of the objective
function. The SGD algorithm solves problem~\eqref{eq:min prob} by iteratively 
taking steps in the direction of negative sampled gradient:
\begin{equation*}
    v^{(k+1)} = v^{(k)} - \eta_k \nabla f_j(v^{(k)}),
\end{equation*}
where $\eta_k$ is the step size at iteration $k$ and the index $j$ is selected
uniformly at random. Its convergence properties are determined by the condition 
number of the matrix $H$, as stated in Theorem~\ref{thm:sgd convergence}.

When the projection matrices $P_i$ are the nearest-neighbor interpolation 
matrices~\eqref{eq:def nn interp} in Definition~\ref{def:proj}, 
the matrices $P_i^* P_i$ are diagonal with real non-negative elements (see 
Remark~\ref{rem:diag nn}), thus the Hessian matrix $H$ is also diagonal with real 
non-negative elements. In this case, its condition number~\cite{trefethen_numerical_1997} is
\begin{equation}
    \kappa(H) = \frac{\max_i H_{ii}}{\min_i H_{ii}}.
    \label{eq:cond H}    
\end{equation}

We will now analyze the structure and the condition number of the Hessian matrix 
$H$ when the projection matrices correspond to nearest-neighbor interpolation,
namely $P_i = P_{\phi_i^*}^{nn}$, for $i=1,\ldots,N$, 
as given in Definition~\ref{def:proj}.
In order to do so, we introduce two necessary concepts. First, the projection 
assignment function of a particle image maps each element of the image to the 
element of the volume whose value is assigned to it by the projection operator.
\begin{definition}[Projection assignment function]
    Let $P_i := P_{\phi_i}^{nn} \in \Cb^{M_x \times M_v}$, $i=1,\ldots,N$, be nearest-neighbor
    interpolation projection matrices given in Definition~\ref{def:proj} and
    corresponding to the pose variables $\phi_i$, $i=1,\ldots,N$.
    We define the projection assignment function 
    $\Lambda_i : \{1,2,\ldots,M_x\} \to \{1,2,\ldots,M_v \}$ as the function that 
    maps each pixel index $k$ of the $i$-th particle image to the voxel index 
    $\Lambda_i[k]$ in the volume $v$ whose value is assigned by the operator 
    $P_i$ at index $k$. Namely, we have that:
    \begin{equation*}
        (P_i v) [k] = T_i[k] v[\Lambda_i(k)],
        \quad
        k = 1,\ldots,M_x,
    \end{equation*}
    where the square brackets notation is used for the value of the image (in the 
    left-hand side) or volume (in the right-hand side) at a particular index $k$, 
    and $T_i[k]$ is the $k$-th element in the diagonal of the translation matrix $T_i$. 
\end{definition}
Second, the voxel mapping set of a volume element contains the indices of the 
images that contain a projection of that volume element.
\begin{definition}[Voxel mapping set]
    For every voxel index $j \in \{1,\ldots,M_v\}$, we define the voxel mapping 
    set $\Omega_j \subseteq \{1,\ldots,N\}$ as the set of indices of 
    images that contain a pixel that is mapped by their projection assignment 
    functions $\Lambda_i$ to $j$, namely:
    \begin{equation*}
        \Omega_j = \{i : \exists k \in \{1,\ldots,M_x\} 
        \text{ such that } \Lambda_i(k) = j\}.
    \end{equation*}
\end{definition}
Given the functions $\Lambda_i$ and the sets $\Omega_j$ defined above, the diagonal 
elements of the (diagonal) matrix $P_i^* P_i$  are $(P_i^* P_i)_{jj} = 0$ 
if $i \notin \Omega_j$ and $(P_i^* P_i)_{jj} \in \{1, 2\}$ otherwise. 
We make the following assumptions: 
\begin{assumptions}\label{assump:projections}
\begin{enumerate}
    \item We assume that each voxel index $j$ is mapped at most once by the projection
        assignment function of an image $\Lambda_i$.
    \item Without loss of generality, we assume that $j=1$ is the index of the 
        voxel corresponding to the center of the coordinate axes. 
        Then, the voxel at $j=1$ is mapped by all projection operators $P_i$,
        $i=1,\ldots,N$, or equivalently, $\Omega_1 = N$.
\end{enumerate}
\end{assumptions}
The second assumption above concerns the ordering of the elements in the vectorized 
representation of the grid, specifically so that the center is mapped to the element 
at index $j=1$, while the first assumption simplifies our analysis (at a cost of
a factor of at most two in the condition number bound below) by ensuring that 
the diagonal elements of $P_i^* P_i$ satisfy:
\begin{equation}
    (P_i^* P_i)_{jj} = \begin{cases}
        0, &\quad\text{if}\quad i \notin \Omega_j,
        \\
        1, &\quad\text{if}\quad i \in \Omega_j.
    \end{cases}
    \label{eq:pp}
\end{equation}
Then, the full Hessian matrix $H$ is also diagonal, with its diagonal elements given by:
\begin{equation}
    H_{jj} = |\Omega_j| + \lambda,
    \label{eq:H diag}
\end{equation}
where $|\Omega_j|$ is the cardinality of the set $\Omega_j$ 
and $0 \leq |\Omega_j| \leq N$, for all $j=1,\ldots,M_v$.

Finally, Proposition~\ref{prop:cond} below captures the main difficulty of the 
reconstruction problem, namely the condition number of the Hessian matrix 
increasing with the resolution.
\begin{proposition}[Condition number bound]\label{prop:cond}
    Let $M_x = M^2$ and $M_v = M^3 $ for grid length $M$ and let Assumptions~\ref{assump:projections} 
    hold for the nearest-neighbor interpolation projection matrices $P_i := P_{\phi_i}^{nn}$.
    Then, for any fixed number of images $N$, we have that:
    \begin{subequations}
    \begin{align}
        &\kappa(H) \geq \frac{N + \lambda}{N/M + \lambda},
        \qquad \forall M \leq N,
        \label{eq:cond number square lessthan}
        \\
        &\kappa(H) = \frac{N + \lambda}{\lambda},
        \quad\qquad \forall M > N.
    \end{align}
    \label{eq:cond number square}
    \end{subequations}
\end{proposition}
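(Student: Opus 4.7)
My plan is to exploit the fact that under Assumptions~\ref{assump:projections} the Hessian $H$ is diagonal with entries $H_{jj} = |\Omega_j| + \lambda$, as stated in~\eqref{eq:H diag}, so that $\kappa(H) = (\max_j H_{jj})/(\min_j H_{jj})$ by~\eqref{eq:cond H}. The proof then reduces to a double-counting argument that controls these two extremes separately.

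First, I would compute the numerator. By Assumptions~\ref{assump:projections}.2, the central voxel $j=1$ is hit by every image, so $|\Omega_1| = N$. Since $\Omega_j \subseteq \{1,\ldots,N\}$ in general, we have $\max_j |\Omega_j| = N$ and hence $\max_j H_{jj} = N + \lambda$ in both regimes.

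Next, I would bound the denominator by counting incidences. Fubini-style, write
\begin{equation*}
    \sum_{j=1}^{M_v} |\Omega_j|
    \;=\; \sum_{j=1}^{M_v} \sum_{i=1}^{N} \mathbf{1}[i \in \Omega_j]
    \;=\; \sum_{i=1}^{N} \bigl|\{\,j : i \in \Omega_j\}\bigr|
    \;=\; \sum_{i=1}^{N} |\Lambda_i(\{1,\ldots,M_x\})|.
\end{equation*}
Assumptions~\ref{assump:projections}.1 says each $\Lambda_i$ hits any voxel at most once, so $\Lambda_i$ is injective and its image has exactly $M_x = M^2$ elements. Therefore $\sum_j |\Omega_j| = N M^2$. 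For $M \leq N$ I would then apply the elementary fact that the minimum is at most the average:
\begin{equation*}
    \min_j |\Omega_j| \;\leq\; \frac{1}{M_v}\sum_{j=1}^{M_v} |\Omega_j|
    \;=\; \frac{N M^2}{M^3} \;=\; \frac{N}{M},
\end{equation*}
yielding $\min_j H_{jj} \leq N/M + \lambda$ and hence the lower bound $\kappa(H) \geq (N+\lambda)/(N/M + \lambda)$ as claimed in~\eqref{eq:cond number square lessthan}. For $M > N$, the total number of voxel incidences is $NM^2 < M^3 = M_v$, so by pigeonhole there exists some $j$ with $\Omega_j = \emptyset$, giving $\min_j H_{jj} = \lambda$ and therefore the equality $\kappa(H) = (N+\lambda)/\lambda$.

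There is no real obstacle here: the statement follows from a clean incidence count combined with the pigeonhole principle. The only subtle point worth stating carefully is the use of Assumptions~\ref{assump:projections}.1 to convert $|\Lambda_i(\{1,\ldots,M_x\})|$ into exactly $M^2$ rather than a mere upper bound; without the injectivity of $\Lambda_i$ the counting identity $\sum_j |\Omega_j| = NM^2$ would only be an inequality, and while this would still suffice for~\eqref{eq:cond number square lessthan} it would loosen the argument in the $M > N$ case. I would also briefly remark that the bound~\eqref{eq:cond number square lessthan} is in fact tight up to constants when the projection directions are generic, because for $M \leq N$ the voxels far from the origin but within the ball of radius $M/2$ are hit on average $N/M$ times.
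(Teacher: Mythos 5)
Your proof is correct and takes essentially the same route as the paper's: the maximum diagonal entry is $N+\lambda$ from Assumption~\ref{assump:projections}.2, and the minimum is controlled by comparing the $NM^2$ total pixel-to-voxel incidences against the $M^3$ voxels --- averaging for $M \leq N$ and pigeonhole for $M > N$. Your explicit Fubini-style double count just spells out what the paper states in a single sentence, so there is nothing substantive to add.
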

\begin{proof}
    Since the matrices $P_i$ are the nearest-neighbor interpolation matrices
    $P_{\phi_i}^{nn}$, the matrix $H$ is also diagonal and, according to~\eqref{eq:cond H},
    to compute $\kappa(H)$ we need to find the largest and smallest elements of $H$. 

    Using~\eqref{eq:H diag} and Assumptions~\ref{assump:projections}, we have that
    $\max_{j} H_{jj} = H_{11} + \lambda = N+\lambda$.

    To compute $\min_{j} H_{jj}$, note that the projection assignment functions
    $\Lambda_i$ for $i=1,\ldots,N$, map $NM^2$ image pixels to $M^3$ volume voxels.

    For $M > N$, there are more voxels than total pixels (in all the
    images), and so there exists a voxel $j^*$ such that $|\Omega_{j^*}| = 0$. 
    Then, $\min_{j}H_{jj} = H_{j^* j^*} = \lambda$, 
    and so $\kappa(H) = \frac{N+\lambda}{\lambda}$.
    For $M \leq N$, there are $NM^2$ pixels mapped to $M^3$ voxels, and therefore
    there exists a voxel $j^*$ such that $|\Omega_{j^*j^*}| \leq NM^2/M^3 = N/M$.
    Then, $\min_j H_{jj} \leq N/M + \lambda$, and so
    $\kappa(H) \geq \frac{N+\lambda}{N/M + \lambda}$.
\end{proof}

We can also write the bounds in Proposition~\ref{prop:cond} in terms of the  
radius $R$ in the Fourier domain. If we assume that the number of pixels in a 2D
disk of radius $R$ is approximately $\pi R^2$ and the number of voxels in a 3D
ball of radius $R$ is approximately $\frac{4}{3} \pi R^3$ then, following a similar 
argument, we obtain:
\begin{subequations}
    \begin{align}
        &\kappa(H) \gtrsim \frac{N+\lambda}{\frac{3N}{4R} + \lambda}, 
        \qquad \forall R \lesssim \frac{3N}{4},
        \label{eq:cond number radius lessthan}
        \\
        &\kappa(H) = \frac{N+\lambda}{\lambda}, 
        \qquad \forall R \gtrsim \frac{3N}{4}.
    \end{align}
    \label{eq:cond number radius}
\end{subequations}
More generally, if the ratio of the number of pixels in a projected image and the 
number of voxels in a volume at a given resolution $R$ is $p(R)$, then the bounds 
in~\eqref{eq:cond number square lessthan} and~\eqref{eq:cond number radius lessthan} 
can be written as:
\begin{equation}
    \kappa(H) \geq \frac{N + \lambda}{p(R) N + \lambda}, 
    \quad \forall R \quad \text{such that} \quad \frac{1}{p(R)} \leq N.
    \label{eq:cond number p}
\end{equation}

\begin{remark}
    While the counting argument above shows that the condition number is large when 
    there are more voxels to reconstruct than pixels in all the particle images, in 
    practice, the condition number grows fast with the resolution due to an additional 
    factor. Specifically, in light of the Fourier slice theorem, each image is used 
    to reconstruct the voxels corresponding to a slice through the volume passing 
    through the center of the coordinate axes. Therefore, the large condition number 
    of the matrix H is also a consequence of the fact that the elements of $H$ 
    corresponding to low-frequency voxels are reconstructed using pixel values in 
    most images, while the elements of H corresponding to high-frequency voxels are 
    ``seen'' by fewer pixels in the particle images. Each new iteration will provide 
    more information to the low-frequency voxels than to the high-frequency ones 
    (relative to the total number of low-frequency voxels and high-frequency voxels,
    respectively), which leads to errors being amplified (or corrected) at different 
    rates when solving the inverse problem. Lastly, this problem is exacerbated by 
    preferred orientations of the particles: the orientation angles often do not cover 
    SO(3) evenly in real datasets, causing the Fourier transform of the volume to miss 
    entire slices.
\end{remark}

In Figure~\ref{fig:cond number plots}, we illustrate the statement above for the
setting of this section, specifically with nearest-neighbor interpolation in the
projection operators and no CTF.
In panel (a), we show the lower bound on $\kappa(H)$ given in~\eqref{eq:cond number p} 
as a function of the radius in the Fourier space, as well as the condition number 
for a dataset of $N=10000$ particle images with uniformly sampled orientations 
with $R$ ranging from $1$ to $304$ voxels at intervals of $16$ and $\lambda = 10^{-8}$.
The condition number grows faster than the lower bound due to the effects 
described in the previous paragraph.
To further illustrate the relationship between the number of images and the condition 
number, we show in panels (b,c) of Figure~\ref{fig:cond number plots} the Hessian $H$ 
when using nearest-neighbor interpolation and no CTF when the dataset contains 
$N=5$ images (panel (b)) and when the dataset contains $N=100$ images (panel (c)). 
This shows how the particle images contribute to a larger fraction of the voxels 
close to zero than those at a large Fourier radius.

In light of the dependence of the rate of convergence of SGD on the condition 
number of the Hessian $H$ given in Theorem~\ref{thm:sgd convergence} and 
equation~\eqref{eq:min iters}, Figure~\ref{fig:cond number plots}(a) suggests 
that the number of iterations required to reach a certain error grows exponentially 
with the resolution. 
Since the root cause is the large condition number at high resolution, we will
address this issue by preconditioning the SGD iterations, specifically by using
an approximation of the diagonal $D \in \mathbb{R}^{M_v \times M_v}$ of $H$:
\begin{equation}
    v^{(k+1)} = v^{(k)} - \eta_k D^{-1} \nabla f(v^{(k)}).
    \label{eq:sgd step}
\end{equation}
\begin{remark}
    For fixed $\sigma, \tau$, taking $\lambda=\sigma^2/\tau^2$ and known poses, 
    the matrix whose inverse appears in the M step of the EM algorithm in 
    equation~\eqref{eq:em v} is the full Hessian matrix $H$ in~\eqref{eq:hessian} 
    of the loss function, and so the M step becomes: 
    \begin{equation*}
        v^{(k+1)} = \left(
            \sum_{i=1}^N P_i^* P_i + \lambda I_{M_v}
        \right)^{-1} 
        \left(
            \sum_{i=1}^N P_i^* x_i
        \right)
        = H^{-1} 
        \left(
            \sum_{i=1}^N P_i^* x_i,
        \right)
    \end{equation*}
    Therefore, EM implicitly solves the conditioning issue 
    that is problematic for SGD, and in our preconditioning approach, we aim to 
    approximate, using mini-batches, the diagonal part of
    this gradient scaling that EM applies at every iteration.
\end{remark}
With the facts above regarding the condition number of the Hessian of the loss function,
we now proceed to estimate the diagonal preconditioner for this matrix.

\begin{figure}[ht!]
    \centering
    \begin{subfigure}[b]{0.4\textwidth}
        \centering
        \includegraphics[width=\textwidth]{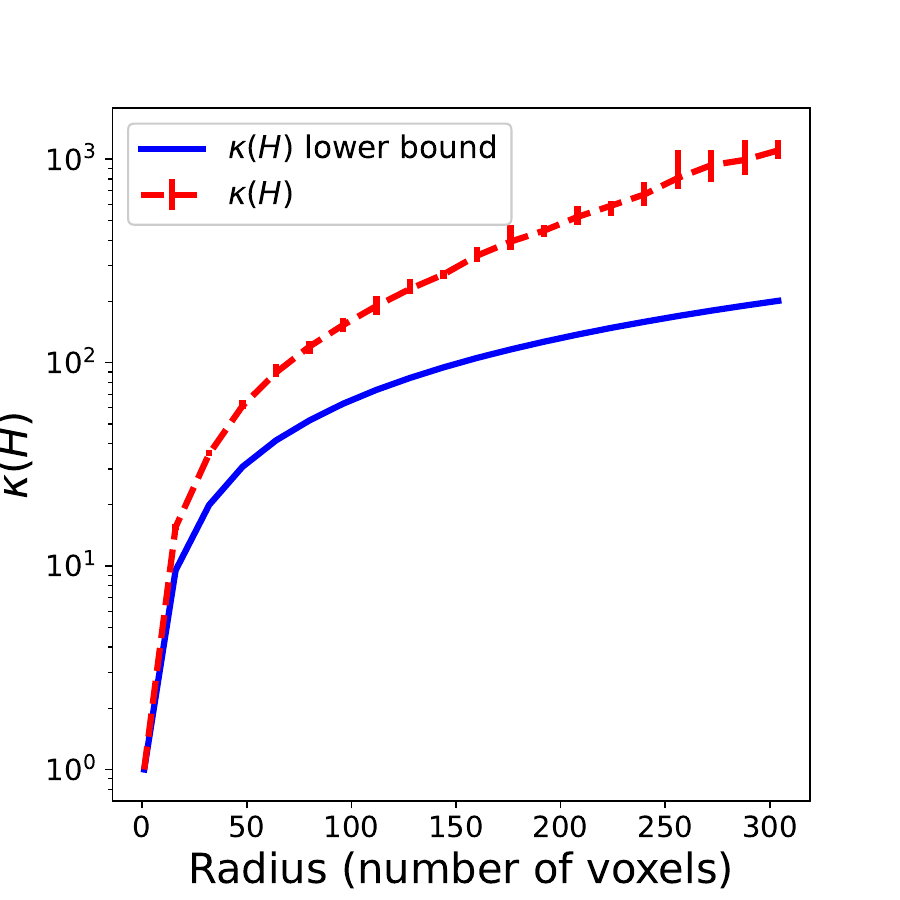}
        \caption{$\kappa(H)$ and lower bound}
    \end{subfigure}
   
    \begin{subfigure}[b]{0.4\textwidth}
        \centering
        \includegraphics[width=\textwidth]{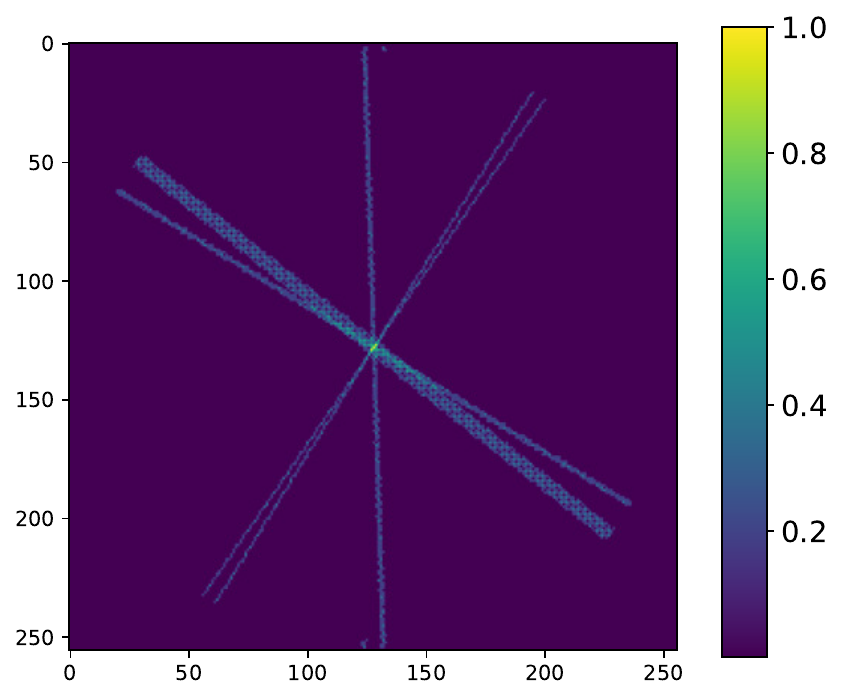}
        \caption{$z=0$ slice of $H$ for $N=5$}
    \end{subfigure}
    %
    \begin{subfigure}[b]{0.4\textwidth}
        \centering
        \includegraphics[width=\textwidth]{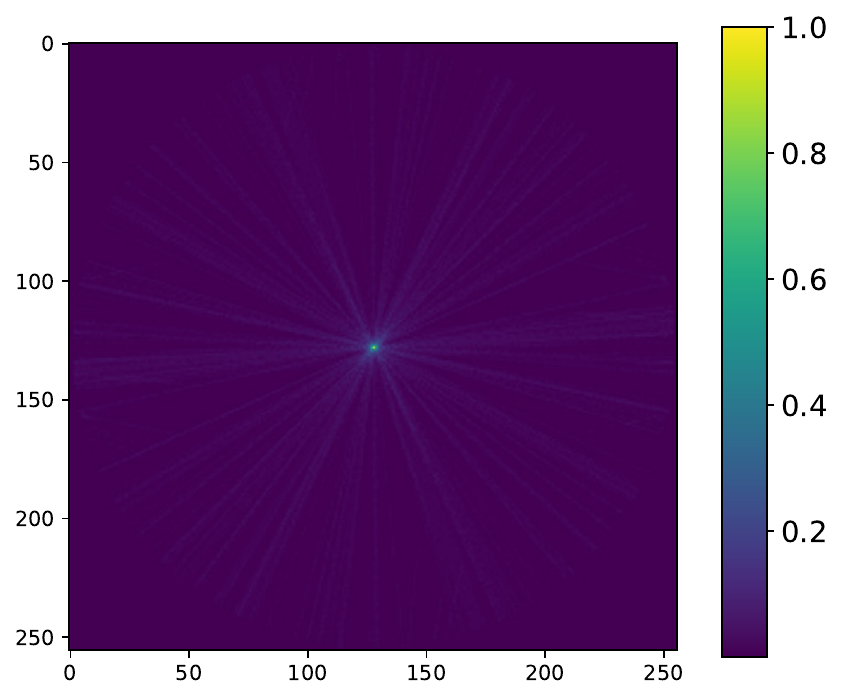}
        \caption{$z=0$ slice of $H$ for $N=100$}
    \end{subfigure}
    \caption{
        The condition number $\kappa(H)$ for nearest-neighbor interpolation and
        no CTF.
        (a) Lower bound (blue, solid) and average value (red, dashed) of $\kappa(H)$ 
        for $N = 10000$ images, Fourier radius of $R$ voxels in the range 
        $\{1, 16, 32, 48, 64, \ldots, 304\}$, and $\lambda=10^{-8}$.
        The lower bound is computed using equation~\eqref{eq:cond number p}, where
        $p(R)$ is computed empirically using spherical and circular shells of radius $R$. 
        To compute the average value of $\kappa(H)$ for each $R$, 10 sets of $N$ 
        uniformly distributed orientations are generated and the average condition 
        number is plotted, as well as error bars showing the minimum and maximum 
        values of the condition number for these sets of orientations.
        (b,c) The $z=0$ plane of the diagonal of the nearest-neighbor Hessian matrix 
        $H$ reshaped as a 3D volume, for $N=5$ and $N=100$ images, respectively, 
        and grid size $256$.}
    \label{fig:cond number plots}
\end{figure}

\subsection{Computing the preconditioner}
\label{sec:hutch}

We aim to obtain an approximation of the diagonal of the Hessian matrix $H$ and 
use it to precondition SGD, which is equivalent to preconditioning the linear 
system~\eqref{eq:lin sys} using the Jacobi preconditioner~\cite{trefethen_numerical_1997}.
Motivated by algorithms such as AdaHessian and OASIS in the machine learning 
literature~\cite{yao_adahessian_2021,jahani_doubly_2022}, we estimate the diagonal 
of the Hessian using Hutchinson's diagonal estimator~\cite{bekas_estimator_2007}
stated in~\eqref{eq:hutch}.

Estimating the diagonal of $H$ using~\eqref{eq:hutch} has two practical advantages. 
First, for any function $f$, applying~\eqref{eq:hutch} only requires computing 
Hessian-vector products, rather than forming the full Hessian matrix. 
Indeed, for a function $f$, a Hessian-vector product is computed efficiently 
using Jacobian-vector products and automatic differentiation as follows:
\begin{equation*}
    (v, z) \mapsto \nabla\left(\nabla f(v)\right) z
\end{equation*}
Second, the computation can be split into mini-batches so that, at each iteration,
only the current mini-batch of images is used for the Hessian-vector product computation.

The update of the preconditioner at the current iteration obtained using the current 
mini-batch is combined with the estimated preconditioner from the previous iteration 
using an exponential average as done, for example, in Adam~\cite{kingma_adam_2015}, 
AdaHessian~\cite{yao_adahessian_2021} and OASIS~\cite{jahani_doubly_2022}.
In addition, to take advantage of the fact that the Hessian $H$ in~\eqref{eq:hessian}
of the objective function~\eqref{eq:optim cp} is independent of the current 
iterate when the orientations are known, the exponential average is taken between 
the estimated preconditioner at the previous iteration and the estimated diagonal 
using all the samples of Rademacher vectors $z$ drawn up to the current iteration. 
However, in the more general reconstruction problem with unknown pose 
variables~\eqref{eq:posterior}, only the current update would be used.
Starting with the identity matrix as the initial estimate, $D^{(0)} = I_{M_v \times M_v}$, 
the update rule for the diagonal estimator $D^{(k)}$ at iteration $k$ is:
\begin{subequations}
    \begin{align}
        D_{avg}^{(k)} &= \frac{1}{k}\left( z^{(k)} \odot Hz^{(k)}\right),
        \label{eq:hutch update}
        \\
        D^{(k)} &= \beta D^{(k-1)} + (1-\beta) D_{avg}^{(k)},
        \label{eq:exp avg update}
    \end{align}
    \label{eq:diag est rule}
\end{subequations}
where $\beta \in (0,1)$ and the Hessian-vector product is computed using the 
current mini-batch. 
An example of the convergence of this estimate over $100$ batches of $1000$ images 
each and a total number of $N=30000$ images is shown in Figure~\ref{fig:diag est}.
\begin{figure}[ht]
    \centering
    \begin{subfigure}[b]{0.4\textwidth}
        \centering
        \includegraphics[width=\textwidth]{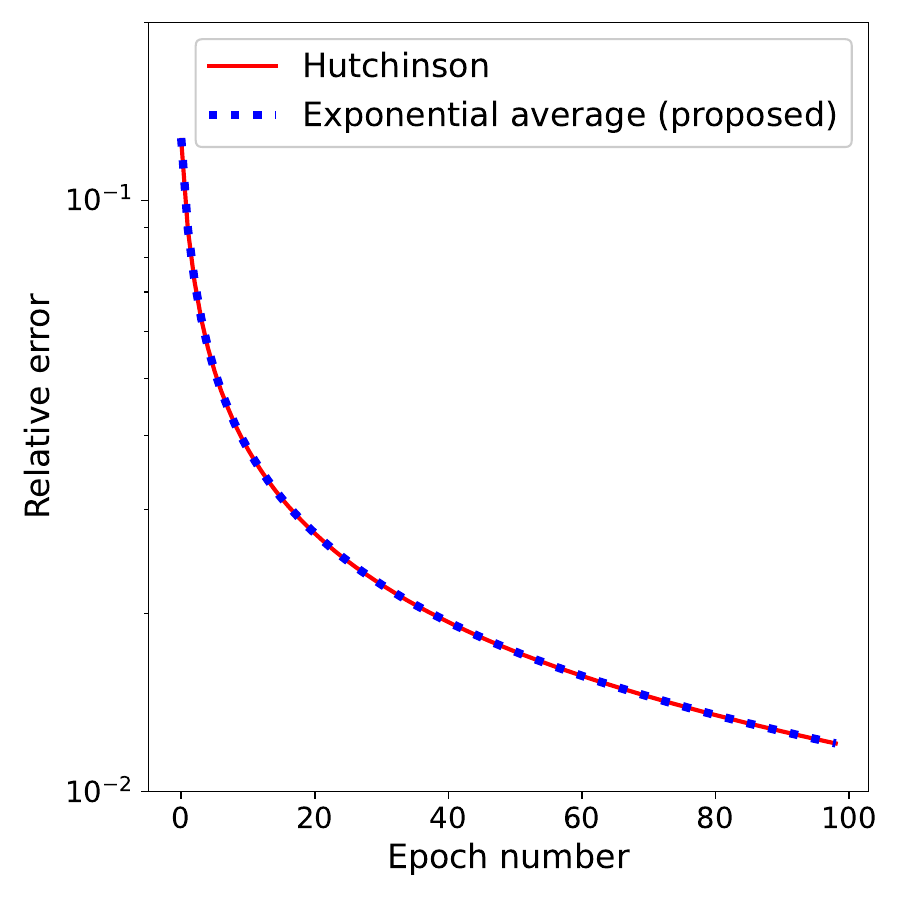}
        \caption{Relative $\ell_2$ error}
    \end{subfigure}
    %
    \begin{subfigure}[b]{0.4\textwidth}
        \centering
        \includegraphics[width=\textwidth]{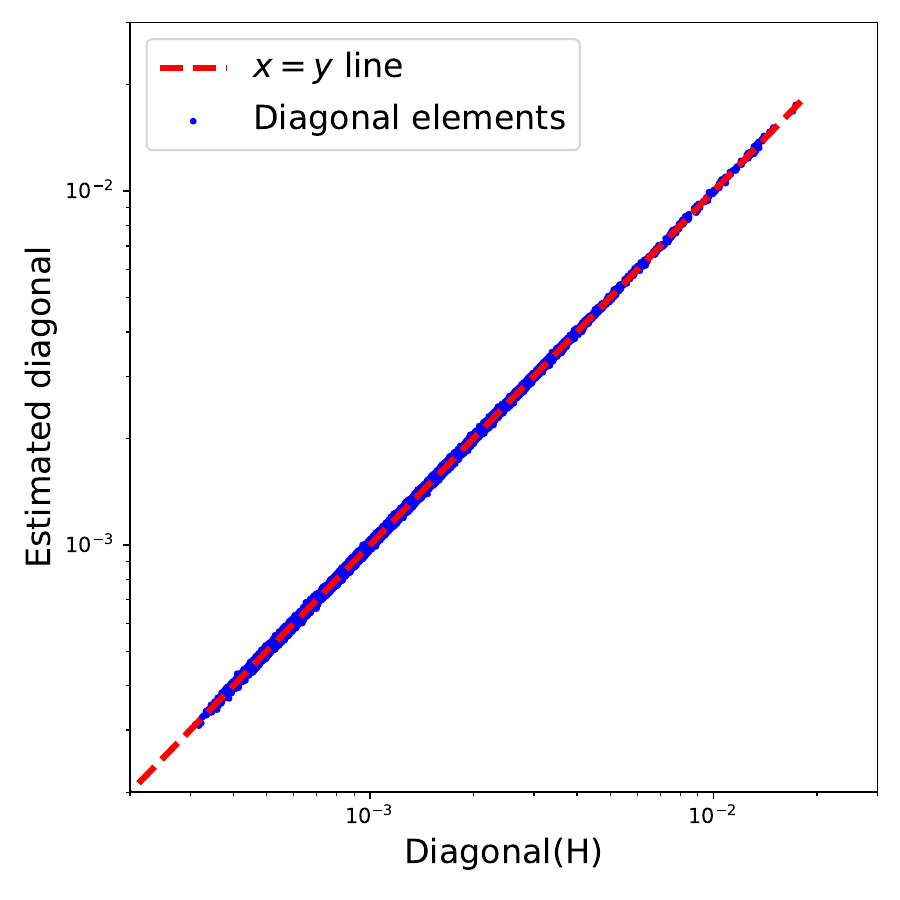}
        \caption{Individual elements error}
    \end{subfigure}
    \caption{Estimating the diagonal of $H$ using~\eqref{eq:diag est rule}, where
        $H$ is the Hessian matrix of the loss function $f$ in~\eqref{eq:optim cp}
        with trilinear interpolation (and therefore, $H$ is not diagonal).
        (a) Relative $\ell_2$ error of the estimated diagonal of $H$ using 
        Hutchinson's diagonal estimator~\eqref{eq:hutch} and the proposed exponential 
        average~\eqref{eq:diag est rule} between an initial estimate $D^{(0)}$ 
        (here the identity matrix) and the average given by Hutchinson's estimator. 
        (b) The individual elements of the estimated diagonal plotted against their 
        true values. The accuracy in the smaller elements is lower, as expected 
        for high-frequency elements not mapped by the projection operators of 
        many particle images, i.e. elements at high-frequency indices $j$ with 
        small $|\Omega_j|$.}
    \label{fig:diag est}
\end{figure}

\begin{remark}
    When the projection operator uses nearest-neighbor interpolation, i.e. 
    $P_i := P_{\phi_i}^{nn}$, and therefore the Hessian matrix $H$ is diagonal, 
    Hutchinson's estimator with Rademacher samples~\eqref{eq:hutch} computes the 
    exact diagonal of $H$ using a single sample vector $z$ or, if computed using 
    mini-batches, after a single epoch.
\end{remark}

\subsection{Thresholding of the estimated preconditioner}
\label{sec:thresholding}

The combination of variance
of the sampled gradient using mini-batches and error in the diagonal approximation
of the Hessian, especially in the early iterations, can lead to highly amplified 
errors in the current iterate $v^{(k)}$. This is particularly problematic in the 
case of particles with preferred orientations.

In general, this problem can be avoided by using variance-reduced stochastic gradient 
methods~\cite{johnson_accelerating_2013,defazio_saga_2014}, which require computing 
full gradients at a subset of the iterations or storing previously computed gradients. 
However, in a typical cryo-EM setting with large datasets, this has a prohibitive 
computational cost.

Instead, we propose a simple solution that leverages the particular structure of 
the cryo-EM reconstruction problem. Given the specific structure of the matrices
$P_i$ and $P_i^*P_i$ (see Definition~\ref{def:proj} and equation~\eqref{eq:pp})
and the fact that the projection operators perform slicing in the Fourier domain
(according to the Fourier slice theorem), the preconditioned SGD iteration~\eqref{eq:sgd step} 
updates the elements of $v^{(k)}$ corresponding to high-frequency voxels at a
lower rate using information in the particle images compared to the low-frequency
voxels. Similarly, the elements of the estimated preconditioner $D^{(k)}$ corresponding 
to high-frequency voxels in the volume have small values, close to the regularization 
parameter $\lambda$, while the elements corresponding to low-frequency voxels have 
magnitudes that reflect the large number of images that contribute to the reconstruction 
of the voxels (see the discussion in the paragraph below equation~\eqref{eq:cond number p}), 
and are scaled by the CTF at that particular resolution. 
In light of~\eqref{eq:H diag}, this is due to $|\Omega_j|$ being large for low-frequency
elements and small for high-frequency elements of the diagonal of $H$.

This knowledge can be incorporated into the reconstruction algorithm in two ways.
One approach is to tune the regularization parameter $\lambda$ so that it balances 
the small entries in the preconditioner with the ability to obtain good convergence 
at high resolution. Alternatively, the same effect can be achieved without interfering 
with the regularization term by thresholding the smallest elements of the preconditioner, 
with the benefit of using the structure of the measurement operator in the preconditioner 
while allowing the freedom to chose the regularization term and parameter by other 
means, for example as done in RELION~\cite{scheres_relion_2012}.
Here, we opt for the latter approach, namely thresholding the elements of 
the preconditioning matrix at the current iteration $D^{(k)}$ below a certain 
value $\alpha$, chosen as follows.

Let $C_i(r)$ be the radially symmetric CTF corresponding to the $i$-th image, as 
a function of the Fourier radius $r \in [0, R]$, and let $P_x(r)$ and $P_v(r)$ 
be the number of pixels in a 2D Fourier shell at radius $r$ and the number of voxels 
in a 3D Fourier shell at radius $r$, respectively.
Following the steps in Section~\ref{sec:cond} with non-trivial CTFs and assuming 
that the particle orientations are uniformly distributed (the details are omitted 
for brevity), the expected value of an element of the Hessian matrix $H$ at the 
maximum Fourier radius $R$ is:
\begin{equation}
    \bar{H} = \frac{P_x(R)}{P_v(R)} \cdot \sum_{i=1}^N |C_i(R)|^2 + \lambda,
    \label{eq:thresh value}
\end{equation}
Setting the threshold of the smallest elements of $D^{(k)}$ to $\alpha = \bar{H}$,
the final preconditioner at iteration $k$ is defined as: 
\begin{equation}
    \hat{D}^{(k)}_{jj} = \max\{|D^{(k)}_{jj}|, \alpha \}, 
    \quad \text{for all} \quad j = 1, \ldots, M_v.
    \label{eq:thresh}
\end{equation}

\subsection{The algorithm}
\label{sec:algorithm}

With the preconditioner estimation approach proposed in Section~\ref{sec:hutch}
and the thresholding described in Section~\ref{sec:thresholding}, we present the 
full algorithm in Algorithm~\ref{alg:precon sgd}. 
No warm start is required for the approximation of the diagonal preconditioner, 
which is initialized with the identity matrix $I_n$ and estimated iteratively;
the running average $D_{avg}$ computed as part of the diagonal 
approximation~\eqref{eq:diag est rule} is initialized as the 
zero $n \times n$ matrix $O_n$.
The step size $\eta^{(k)}$ at each iteration is set using the stochastic Armijo 
line-search~\cite{vaswani_adaptive_2021}, namely the largest step size $\eta$
is sought that satisfies the condition:
\begin{equation}
    f_{\Ic_k}\left(v^{(k)} - \eta D^{-1} \nabla f_{\Ic_k}(v^{(k)}) \right) 
    \leq f_{\Ic_k}(v^{(k)}) - c \cdot \eta \|\nabla f_{\Ic_k}(v^{(k)}) \|^2_{D^{-1}},
    \label{eq:stochastic armijo}
\end{equation}
where $\mathcal{I}_k$ is the index set of the current mini-batch, $D$ is
the preconditioner, and $c \in (0,1)$ is a hyper-parameter.
Condition~\eqref{eq:stochastic armijo} ensures that a sufficient decrease in the 
objective function is attained over the current mini-batch at every iteration

\begin{algorithm}[h]
    \caption{}
    \label{alg:precon sgd}
    \begin{algorithmic}[1]
        \Require $v^{(0)}$, $\eta^{(0)}$, $c$,
        \State Let $\alpha = \bar{H}_R$ as in~\eqref{eq:thresh value}
        \State $D_0 = I_n $
        \State $D_{avg} = O_n$ 
        \For{$k = 1, 2, \ldots$} 
            \State Draw index set $\Ic_k$ and Rademacher(0.5) vector $z^{(k)}$
            \State $D_{avg} = \frac{k-1}{k} D_{avg} + \frac{1}{k} z^{(k)} \odot \nabla^2 f_{\Ic_k}(v^{(k)}) z^{(k)}$ 
            \Comment{Hutchinson's estimator~\eqref{eq:hutch update}}
            \State $D^{(k)} = \beta D^{(k-1)} + (1-\beta) D_{avg}$
            \Comment{Estimate update~\eqref{eq:exp avg update}}
            \State $\hat{D}^{(k)}_{jj} = \max\{|D^{(k)}_{jj}|, \alpha\}, 
                \quad \forall j=1,\ldots,M_v$
            \Comment{Preconditioner thresholding~\eqref{eq:thresh}}
            \State $\eta^{(k)} = \eta^{(k-1)}$
            \State $v^{(k+1)} = v^{(k)} - \eta^{(k)} (\hat{D}^{(k)})^{-1} \nabla f_{\Ic_k}(v^{(k)})$
            \Comment{Stochastic gradient step~\eqref{eq:sgd step}}
            \While{$f_{\Ic_k}(v^{(k+1)}) > f_{\Ic_k}(v^{(k)}) - c \cdot \eta^{(k)} \|\nabla f_{\Ic_k}(v^{(k)})  \|^2_{{(\hat{D}^{(k)})}^{-1}} $}
            \Comment{Line-search~\eqref{eq:stochastic armijo}}
                \State $\eta^{(k)} = \eta^{(k)} / 2$
                \State $v^{(k+1)} = v^{(k)} - \eta^{(k)} \hat{D}^{-1} \nabla f_{\Ic_k}(v^{(k)})$
            \EndWhile
        \EndFor 
    \end{algorithmic}
\end{algorithm}


\section{Numerical experiments}
\label{sec:experiments}

In this section, we present numerical experiments\footnote{
    The code for reproducing the numerical experiments is available at \\
    \url{https://github.com/bogdantoader/simplecryoem}.
} that demonstrate the arguments
given in this article regarding the convergence of the SGD algorithm for 
high resolution cryo-EM reconstruction. In particular, the numerical experiments
in this section further verify two claims:

\begin{itemize}
    \item\textbf{Claim 1:} The condition number $\kappa(H)$ of the Hessian of the
        loss function is large at high resolution, which leads to slow convergence
        of the SGD algorithm.

    \item\textbf{Claim 2:} Preconditioning SGD using the diagonal preconditioner as 
        estimated using the tools described in Sections~\ref{sec:hutch}-\ref{sec:algorithm}
        leads to improved convergence at high resolution.
\end{itemize}

\subsection{Setup}

The dataset used in this section is derived from the Electron Microscopy Public 
Image Archive database entry EMPIAR-10076 of the bacterial ribosome, where 
$N=30000$ images of $192 \times 192$ pixels are selected and whose pose variables 
have been computed using RELION\footnote{
    The parameters were first obtained from the CryoDRGN tutorial~\cite{zhong_cryodrgn_2021},
    converted to RELION format using Pyem~\cite{asarnow_pyem_2019} and then further processed
    in RELION~\cite{scheres_relion_2012}. 
    Specifically, one round of 3D refinement and one round of 3D classification 
    with 8 classes were performed, followed by one final 3D refinement of one
    of the resulting homogeneous classes, consisting of $N=33026$ particles, 
    to a resolution of $4.2$ \AA. $N=30000$ particles were selected at random
    from the resulting homogeneous class.
    While the complete removal of all continuous heterogeneity is not relevant 
    for this paper and therefore has not been attempted, the resulting dataset 
    is still a good approximation of the EMD-8455 and EMD-8456 discrete classes 
    associated with the EMPIAR-10076 entry.
}. The condition number of the reconstruction 
problem for this dataset with the computed pose variables is shown in 
Figure~\ref{fig:cond number plots real data}(a) for increasing values of the 
maximum Fourier radius $R$. Nearest-neighbor interpolation has been used for this
figure, and therefore the condition number has been computed using~\eqref{eq:cond H}.
The condition number grows quickly with the resolution, and at the maximum radius 
$R$ corresponding to the grid side length $M=192$, it is of the order of $10^2$.
The bound in Proposition~\eqref{prop:cond} and 
equations~\eqref{eq:cond number radius}-\eqref{eq:cond number p}, shown as the solid 
blue line in Figure~\ref{fig:cond number plots real data}(a), holds for the condition
number when no CTF is included, shown as the red, dashed line. 
While the theory in Section~\ref{sec:main} does not handle the case when CTF is used, we will show empirically in Section~\ref{sec:experiments results} that the same trend of the ill-conditionning increasing with the resolution holds when the particle images, extracted from true micrographs, are corrupted by the CTF.
To see why the theory does not apply, note that because the CTF has magnitude less than one at the origin and it oscillates around zero, 
the numerator in~\eqref{eq:cond H} is smaller than in the 
case when no CTF is used, and therefore the condition number computed in this setting, 
shown in the green dashdot line, is not always larger than the theoretical lower bound.
Figure~\ref{fig:cond number plots real data}(b), showing a one-dimensional cross-section 
through the diagonal of the Hessian both with and without CTF in the same setting as 
in Figure~\ref{fig:cond number plots real data}(a), explains the shape of the
green curve in Figure~\ref{fig:cond number plots real data}(a): both the minimum
and the maximum values of $\text{diag}(H)$ occur in the interval $[0,20]$ (in voxels).
The effect of the CTF is further illustrated in Figure~\ref{fig:cond number plots real data}(c), 
in the two-dimensional cross-section of the diagonal of the Hessian matrix $H$ 
in the case when trilinear interpolation is used. 
While the CTF leads to a Hessian condition number that is lower than the one without CTF, the difficulty of the underlying inverse problem is not decreased: as the CTF removes information from the images, the difficulty of the reconstruction problem is actually increased.

To verify the two claims on the dataset described above, we run three algorithms:
SGD with no preconditioner; SGD with a preconditioner that has been precomputed 
in advance using~\eqref{eq:diag est rule} over 1000 epochs with batch size 1000; 
and Algorithm~\ref{alg:precon sgd}, namely SGD with a preconditioner estimated during the refinement process over 10 epochs.
All algorithms are initialized with the same random volume obtained by sampling
each voxel value from a complex normal distribution,
and use the stochastic Armijo line-search~\eqref{eq:stochastic armijo}
for the step size adaptation. While the theory in Section~\ref{sec:cond} applies 
to nearest-neighbor interpolation, where the matrices $P_i^* P_i$ are diagonal
(and therefore the optimization problem can be solved easily by other methods),
in the numerical experiments presented in this section, the projection operators
are implemented using trilinear interpolation, which is often used in practice. 
In this case, the Hessian matrix $H$, whose diagonal we estimate and use as a 
preconditioner, is no longer diagonal.


\begin{remark}
    The convergence of Algorithm~\ref{alg:precon sgd} is expected to be at most as 
    good as the convergence of SGD with the precomputed preconditioner. 
    This is due to the fact that the preconditioner has been computed using 1000
    epochs of~\eqref{eq:diag est rule} and this preconditioner is then used
    from the start of the SGD algorithm, while Algorithm~\ref{alg:precon sgd}
    starts with the identity matrix as the preconditioner and estimates it during
    the 10 epochs (using~\eqref{eq:diag est rule} as well). 
    If the particle poses are known and fixed, one can simply precompute 
    the preconditioner corresponding to the forward operator with the given poses in advance.
    However, the aim of the proposed method for estimating the
    preconditioner is to be incorporated into a reconstruction algorithm that
    estimates the poses in addition to the volume. By doing so, the optimal 
    preconditioner changes when pose variables are updated, making it impossible
    to precompute a good preconditioner in advance. Our aim is to show that over
    only a small number of epochs (10 in our numerical experiments), 
    the proposed method computes a good enough preconditioner that allows obtaining 
    a solution to the inverse problem at high-frequencies.
\end{remark}

To evaluate and compare the performance of the three algorithms, we first compute a numerical ground truth solution to the optimization problem~\eqref{eq:optim cp} using the L-BFGS algorithm~\cite{liu_limited_1989}, which we run for 1000 iterations. 
This approach to obtaining a ground truth benefits from the convexity of problem~\eqref{eq:optim cp}, as well as the good convergence properties of the L-BFGS algorithm due to its estimation of the Hessian of the loss function. 
By running it for a number of iterations much larger than the number of epochs for which we would run SGD in a practical setting in cryo-EM reconstruction, we ensure that the L-BFGS solution is a good baseline against which to evaluate the SGD solutions.
We show the convergence of L-BFGS on the given dataset and problem~\eqref{eq:optim cp} in Figure~\ref{fig:cond number plots real data}(d).

In the runs of the three SGD algorithms, we compute the value of the loss function in~\eqref{eq:optim cp} after each epoch (a full pass through the dataset), as well as the Fourier Shell Correlation (FSC) of each reconstruction with the L-BFGS ground truth solution.

The FSC, a standard error measure in the cryo-EM literature, is the cross-correlation 
coefficient between two volumes across three-dimensional shells in the Fourier 
domain~\cite{harauz_exact_1986}. Specifically, given two volumes $u$ and $v$ in 
the Fourier space, the FSC at radius $r$ from the origin is defined as
\begin{equation}
    \text{FSC}(r) = \frac{
        \sum_{\ell \in S_r} u_{\ell} v_{\ell}^*
    }{
        \sqrt{(\sum_{\ell \in S_r} |u_{\ell}|^2)(\sum_{\ell \in S_r} |v_{\ell}|^2)}
    },
    \label{eq:fsc}
\end{equation}
where $S_r$ is the set of Fourier voxels in the spherical shell at radius $r$.
While the FSC is used in the cryo-EM literature as a measure of the resolution or consistency of a reconstruction, here it will be used differently. Instead, we will compute the FSC between the iterates of each SGD algorithm and the L-BFGS ground truth volume in order to understand how different frequency bands of the structures are converging at different rates.

\subsection{Results}
\label{sec:experiments results}

Figure~\ref{fig:results} shows the results obtained using the three algorithms.
In panel (a), we see the convergence of the loss function significantly improved
when using the precomputed preconditioner (red) over not using a preconditioner (blue),
verifying Claim 1 above. 
When estimating the preconditioner during refinement
using Algorithm~\ref{alg:precon sgd}, the value of the loss (green, solid) is between 
those of the previous two algorithms and approaches the fully preconditioned SGD
as the preconditioner progressively becomes more accurate, verifying Claim 2.
For reference, the black dashed line shows the loss function value at the final volume obtained using L-BFGS and used as a ground truth reconstruction. 
As expected, this is lower than all of the three SGD algorithms that we evaluate in this section.
In panel (b), we show the FSC between the final reconstructions of each of the
three algorithms and the ground truth reconstruction. While at low resolution,
the FSC value is high for all algorithms, showing good convergence, at high resolution
the FSC degrades for SGD with no preconditioning (blue), while the two preconditioned
algorithms have a higher value, closer to one. 

For further insight into the difference between SGD without a preconditioner and 
SGD with a precomputed preconditioner at low and high resolution, as well as 
how the estimated preconditioner behaves in relation to them, we show
in Figure~\ref{fig:fsc epoch} the FSC with the ground truth for the three algorithms
at specific Fourier shells and across epochs.
Panel (a) shows the FSC for a low-frequency Fourier shell, panel (b) shows a 
medium-frequency shell, and panel (c) shows a high-frequency shell. 
At low resolution, all three algorithms are almost indistinguishable, while at medium 
resolution, the non-preconditioned SGD shows slower convergence than the preconditioned 
SGD, with SGD with the the estimated preconditioner quickly approaching the accuracy of 
the fully preconditioned SGD. 
The advantage of using a preconditioner is seen most clearly at high resolution,
where the FSC of non-preconditioned SGD is much lower than the other two algorithms. 
SGD with an estimated preconditioner converges more slowly than at medium resolution, 
but eventually approaches the FSC of SGD with a precomputed preconditioner. 
The small difference in the FSC at the last epoch
between the two preconditioned algorithms is likely due to a combination of factors,
for example the limited accuracy of the preconditioner estimate after only 10 epochs.
Lastly, we show in Figure~\ref{fig:fsc shell epoch} a complete overview of
the shell-wise FSC for each algorithm using a heat map of the correlation as a 
function of the Fourier shell number and the epoch number.

In Figure~\ref{fig:maps}, we show a qualitative comparison between the outputs of the three SGD algorithms and the ground truth volume to illustrate the effect of the preconditioner on the high-resolution details in the reconstructions.
The figure shows four different views of the volumes (one view on each row), where the volumes are displayed as the isosurface at six standard deviations above the mean voxel value of each volume. 
Firstly, we see that the main differences between the ground truth L-BFGS volume (first column, grey) and the volume obtained using SGD with no preconditioner (second column, blue) are in the high-resolution details, where the former shows more defined fine-scale features than the latter.
Moreover, both volumes obtained using the preconditioned SGD (third column, red for the precomputed preconditioner and fourth column, green for the estimated preconditioner using Algorithm~\ref{alg:precon sgd}) show a similar level of high-resolution detail to the ground truth volume.
Secondly, there is no significant visual difference between the volumes obtained using the precomputed and estimated preconditioners.
These conclusions drawn from Figure~\ref{fig:maps} are consistent with the quantitative results shown in Figures~\ref{fig:results}-\ref{fig:fsc shell epoch}, especially regarding the convergence of the SGD algorithms at low resolution (where the preconditioner does not have a significant effect) and at high resolution (where the preconditioner leads to the differences between the reconstructions), as shown in Figure~\ref{fig:fsc epoch} and Figure~\ref{fig:fsc shell epoch}.

The aim of the figures in this section is to show, either quantitatively (Figures~\ref{fig:results}-\ref{fig:fsc shell epoch}) or qualitatively (Figure~\ref{fig:maps}), the differences between the resulting volumes and how the preconditioner leads to a different reconstruction that is closer to the numerical ground truth obtained by solving the optimization problem~\eqref{eq:optim cp} to higher accuracy.
The results in this section (and more generally in this paper) do not refer to a measure of the overall quality of a cryo-EM map, as this is dependent on multiple factors, both in terms of how the map is obtained and in terms of how such a measure of the map quality is computed. In particular, obtaining a ``good'' map involves multiple steps, including estimating the noise in the images, accurately estimating the pose variables, appropriately regularizing and filtering the maps (i.e. by splitting the data into half sets, computing independent reconstructions, and obtaining a Wiener filter from the FSC between the two reconstructions, that is then applied to the final map) and solving the volume reconstruction problem to high accuracy. Measuring the quality of a cryo-EM map, likewise, involves computing the FSC between independent half set reconstructions and using the appropriate FSC threshold to determine the final resolution (and hence the quality) of the map.
As this paper only addresses the volume reconstruction problem (given fixed values of the other parameters such as pose variables, noise level and regularization parameter), the results in the current section only show the contribution of the preconditioner to the accuracy of the solution to the volume reconstruction problem~\eqref{eq:optim cp} and not how the map quality would improve if such a preconditioner was used in the full cryo-EM image processing pipeline that also estimates the other parameters together with accurately measuring the final resolution based on half-set FSC curves. 
Thus, the usual FSC threshold values of $0.143$ or $0.5$ used in the cryo-EM literature~\cite{rosenthal_optimal_2003} are not meaningful in our plots showing FSC curves.

Finally, in light of equation~\eqref{eq:min iters}, the number of iterations
or epochs required for a gradient-based algorithm to achieve an accuracy $\epsilon$
scales linearly with the condition number of the Hessian matrix $H$. 
We show in Figure~\ref{fig:num iters real data} the impact of the condition number
on the number of epochs required by each of the three algorithms to reach 
a certain FSC value in our experiments, for each Fourier shell. As the condition
number of the preconditioned algorithms scales better with the resolution, the 
plots show the number of iterations growing more slowly with the resolution 
for the preconditioned algorithms than in the non-preconditioned case.

\begin{figure}
    \centering
    \begin{subfigure}[b]{0.4\textwidth}
        \centering
        \includegraphics[width=\textwidth]{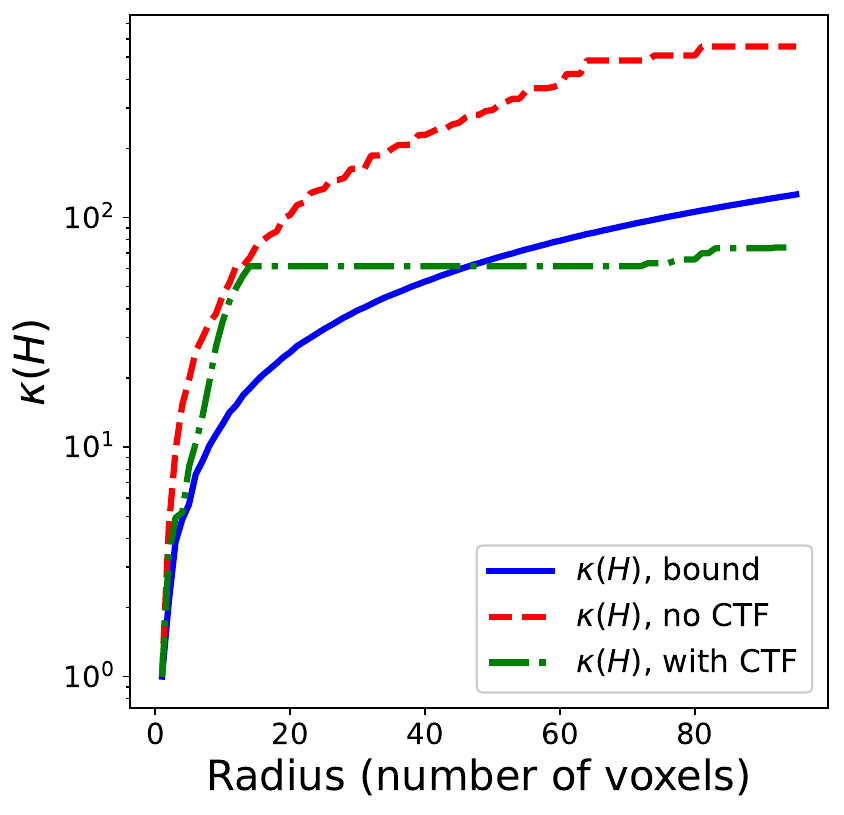}
        \caption{$\kappa(H)$ and the lower bound}
    \end{subfigure}
    \begin{subfigure}[b]{0.4\textwidth}
        \centering
        \includegraphics[width=\textwidth]{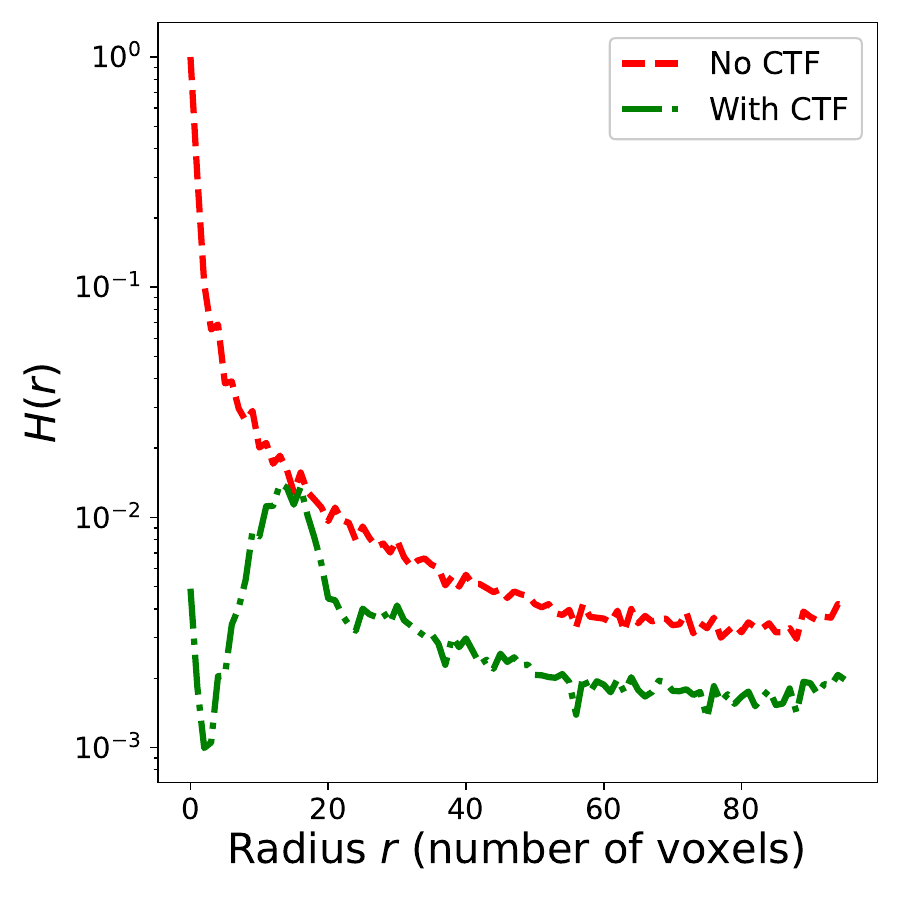}
        \caption{1D cross-section of $\text{diag}(H)$}
    \end{subfigure}
    \begin{subfigure}[b]{0.4\textwidth}
        \centering
        \includegraphics[width=\textwidth]{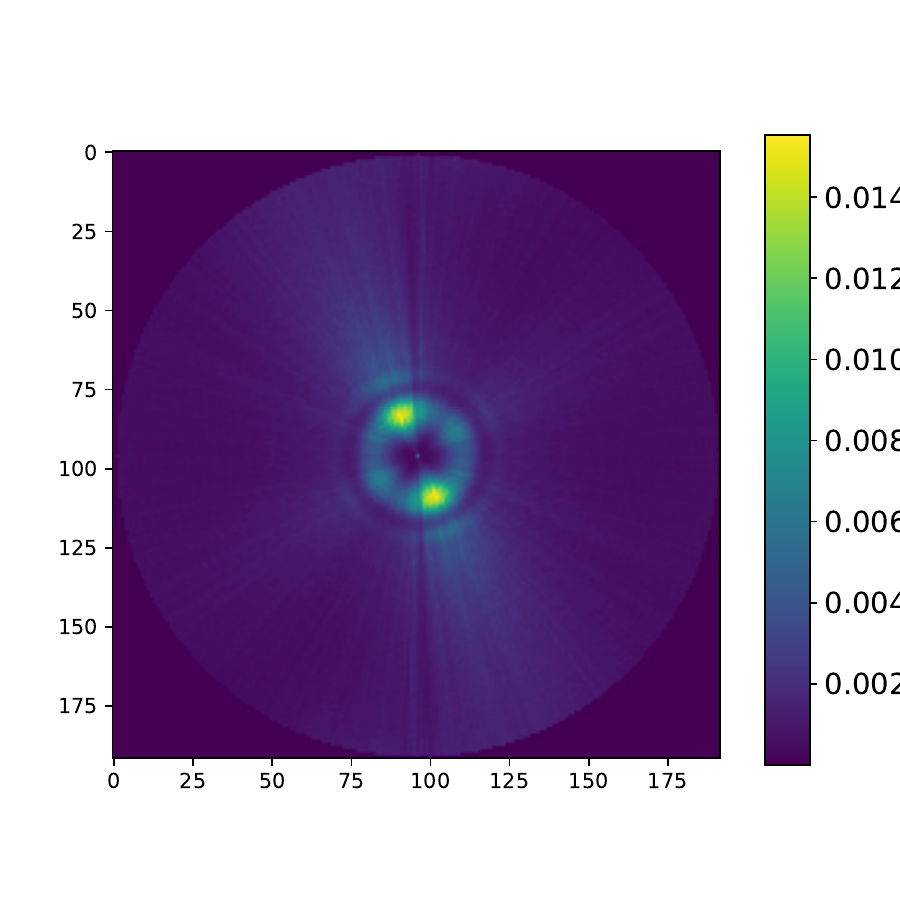}
        \caption{$z=0$ slice of $H$}
    \end{subfigure}
    \begin{subfigure}[b]{0.4\textwidth}
        \centering
        \includegraphics[width=\textwidth]{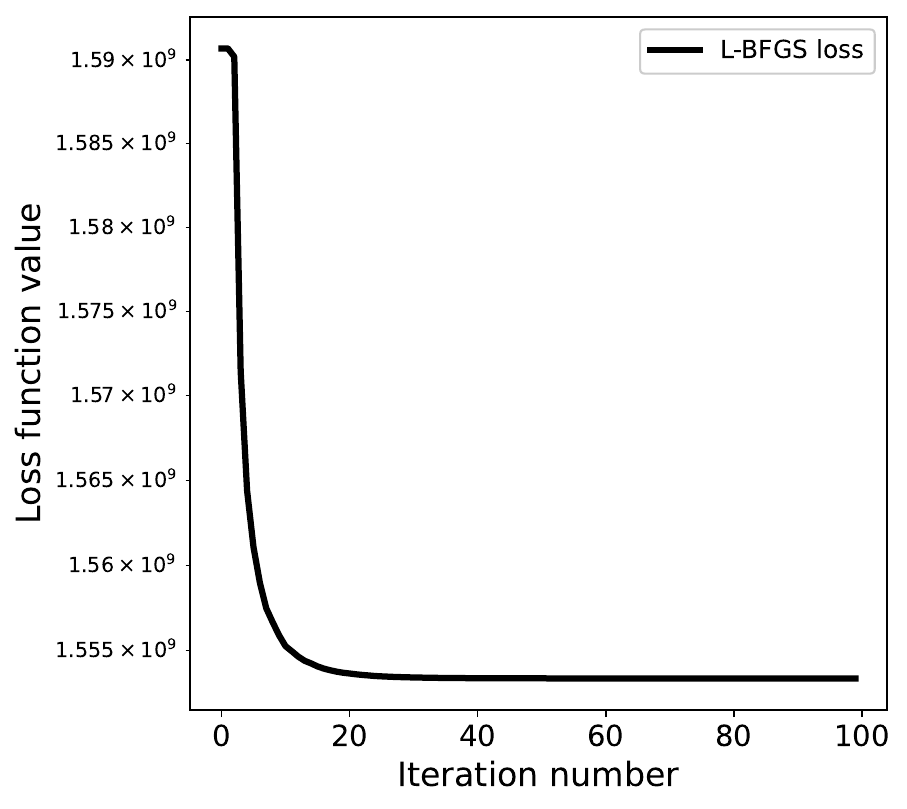}
        \caption{Convergence of L-BFGS}
    \end{subfigure}
    \caption{
        (a) The condition number $\kappa(H)$ for the experimental dataset used in 
        Section~\ref{sec:experiments}, with $N = 30000$ images, grid of dimensions
        $192\times 192 \times 192$, and $\lambda=10^{-8}$:
        lower bound of $\kappa(H)$ (blue, solid) computed using~\eqref{eq:cond number p},
        the value of $\kappa(H)$ for the given dataset without CTF (red, dashed) 
        and with CTF (green, dashdot).
        The condition numbers computed for this plot are for $H$ with nearest-neighbor
        interpolation.
        (b) The cross-section of the diagonal of the Hessian matrix $H$ computed in 
        (a) without CTF (red, dashed) and with CTF (green, dashdot).
        (c) The $z=0$ plane of the diagonal of the Hessian matrix $H$ for the given dataset, 
        computed using trilinear interpolation and reshaped as a 3D volume. Note the effect 
        of the CTF, in contrast to Figure~\ref{fig:cond number plots}(c), where the particle 
        images are not corrupted by the CTF.
        (d) The convergence of L-BFGS on problem~\eqref{eq:optim cp} with trilinear interpolation and the dataset described in Section~\ref{sec:experiments} to obtain the ground truth volume against which we evaluate the solutions of the SGD algorithms.
    }
    \label{fig:cond number plots real data}
\end{figure}

\begin{figure}
    \centering
    \begin{subfigure}[b]{0.4\textwidth}
        \centering
        \includegraphics[width=\textwidth]{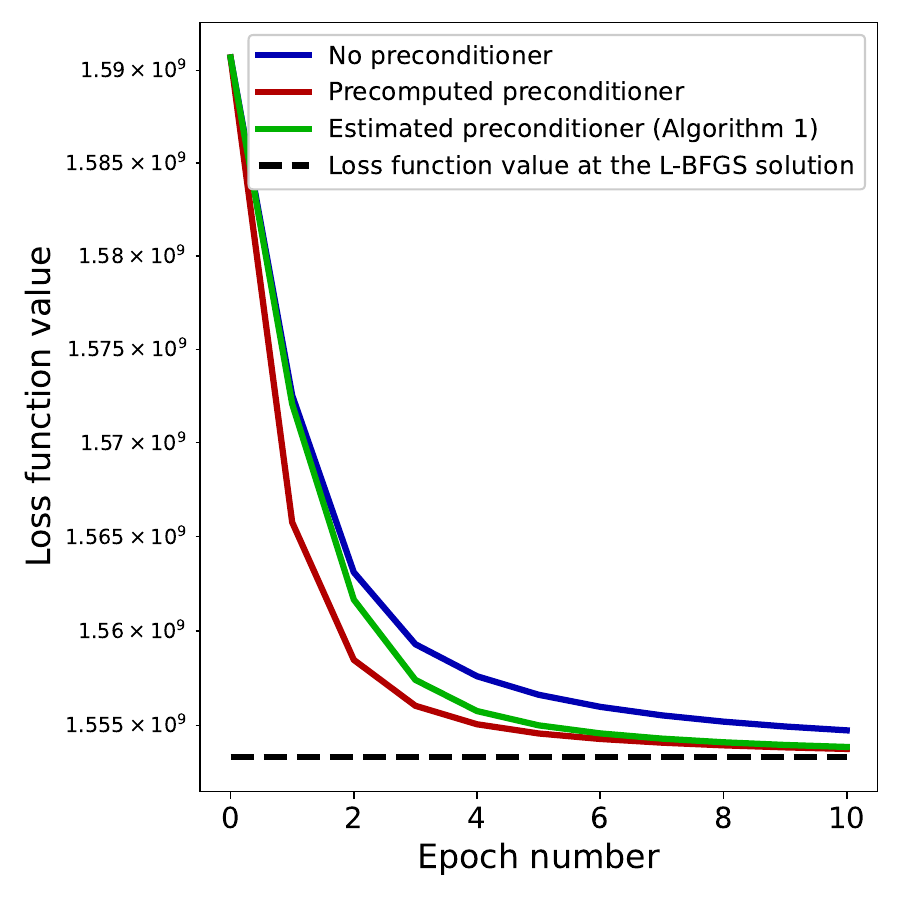}
        \caption{Loss}
        \label{fig:results loss}
    \end{subfigure}
    \begin{subfigure}[b]{0.4\textwidth}
        \centering
        \includegraphics[width=\textwidth]{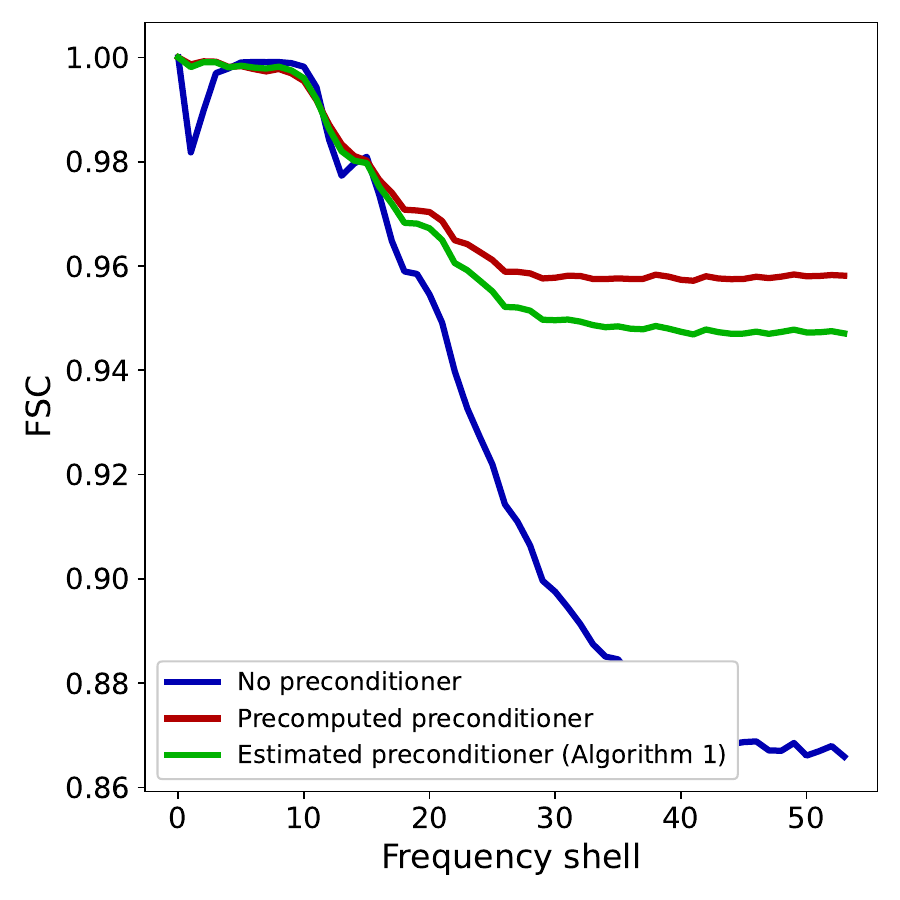}
        \caption{FSC with ground truth}
        \label{fig:results fsc}
    \end{subfigure}
    \caption{Results of the numerical experiments with trilinear interpolation 
        projection matrices $P_{\phi}^{tri}$ for $N=30000$ images of 
        $192 \times 192$ pixels. Panel (a) shows the loss function value and panel 
        (b) shows the FSC of the final reconstruction with a ground truth solution for SGD 
        with no preconditioner (blue), SGD with a precomputed preconditioner
        (red), and Algorithm~\ref{alg:precon sgd}, namely SGD with an estimated preconditioner (green). 
        The black dashed line in (a) shows the loss function value of the final L-BFGS solution used as ground truth, against which we compute the FSC in (b).
    }
    \label{fig:results}
\end{figure}

\begin{figure}
    \centering
    \begin{subfigure}[b]{0.32\textwidth}
        \centering
        \includegraphics[width=\textwidth]{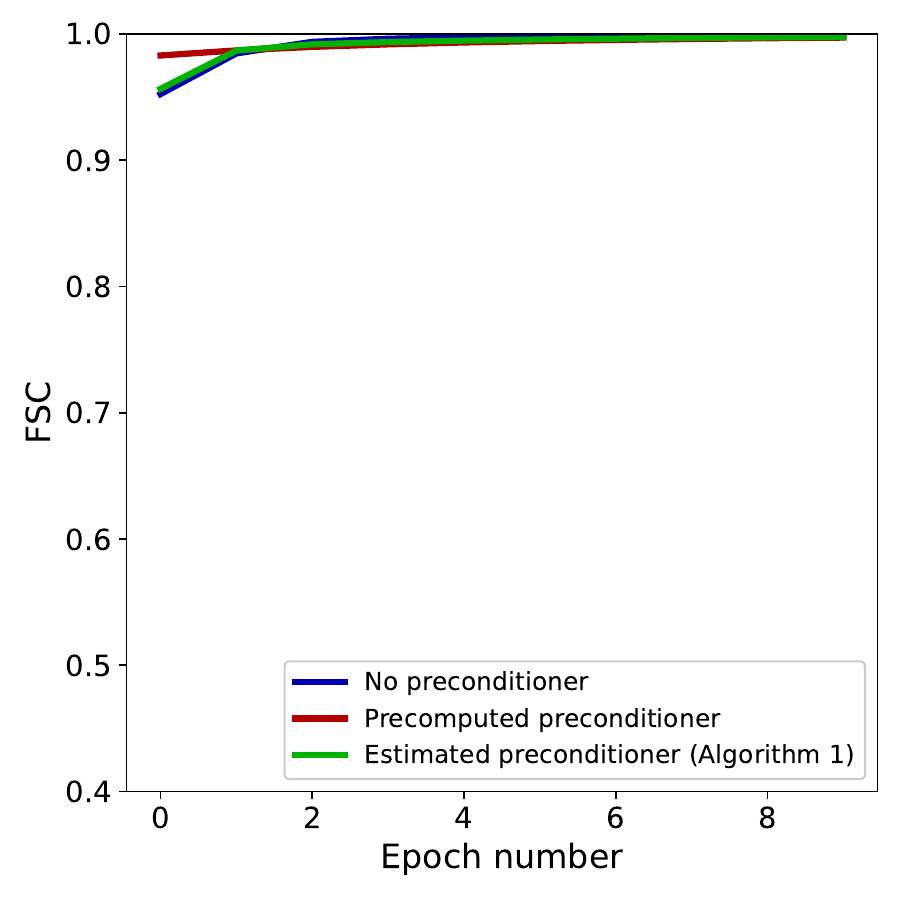}
        \caption{Fourier shell 10/55}
        \label{fig:fsc epoch low}
    \end{subfigure}
    \hfill
    \begin{subfigure}[b]{0.32\textwidth}
        \centering
        \includegraphics[width=\textwidth]{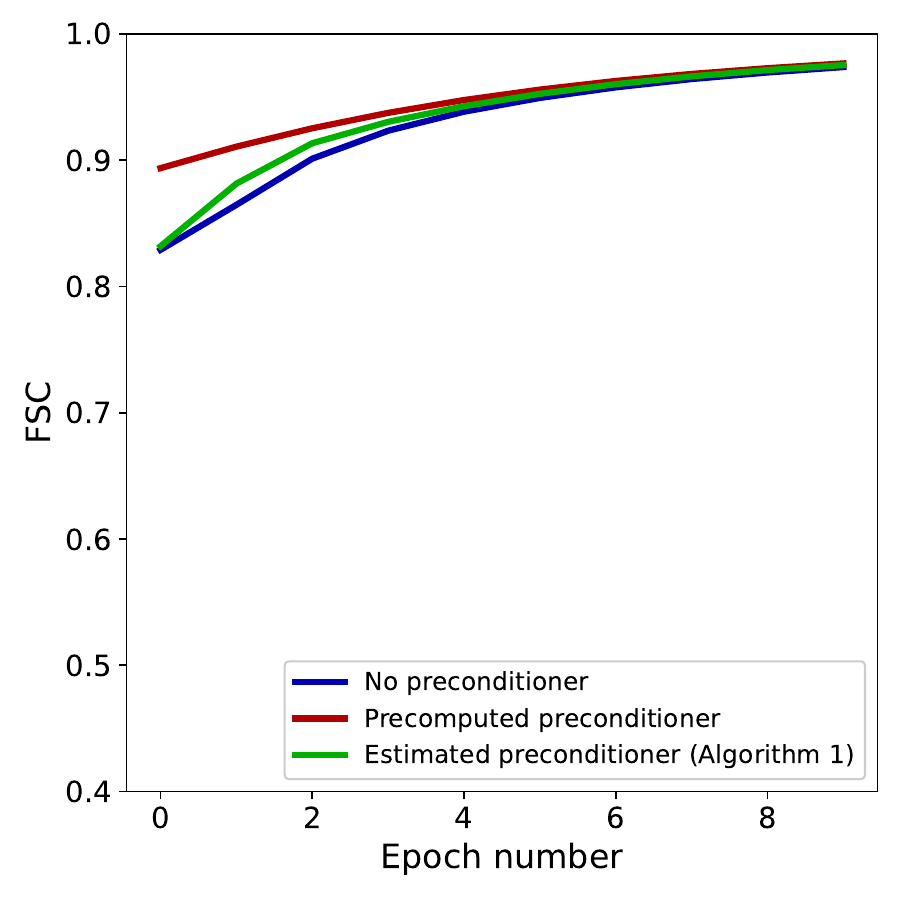}
        \caption{Fourier shell 17/55}
        \label{fig:fsc epoch medium}
    \end{subfigure}
    \hfill
    \begin{subfigure}[b]{0.32\textwidth}
        \centering
        \includegraphics[width=\textwidth]{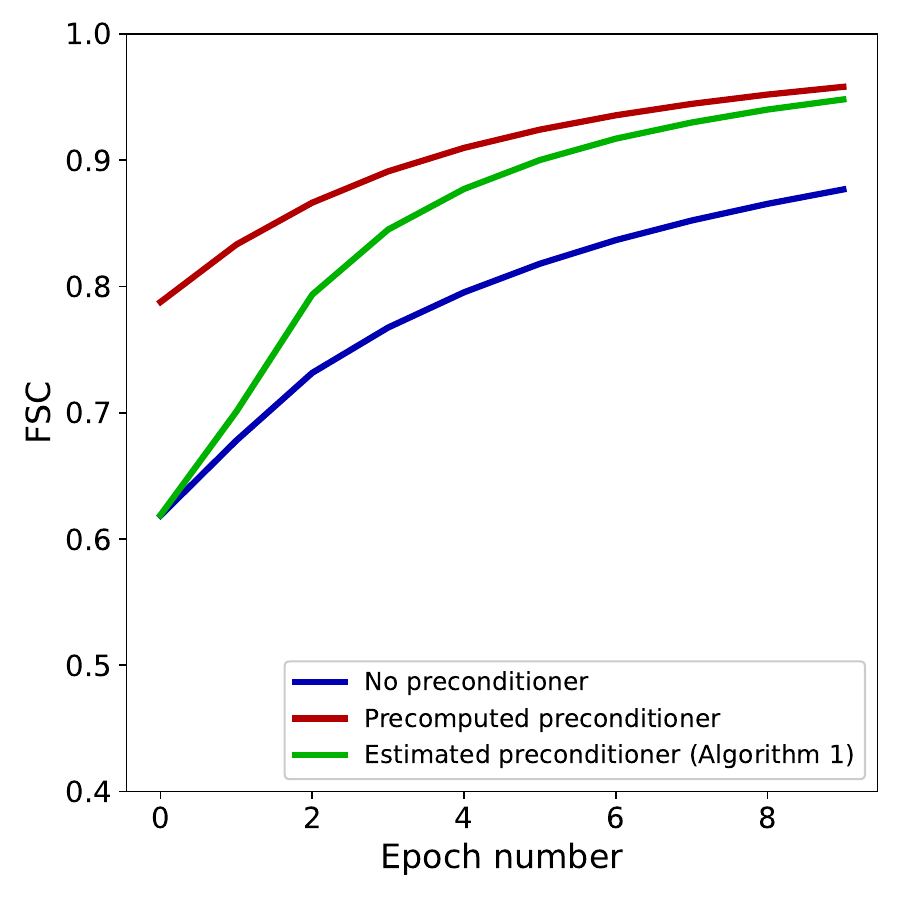}
        \caption{Fourier shell 40/55}
        \label{fig:fsc epoch high}
    \end{subfigure}
    \caption{FSC for individual Fourier shells across epochs for a low-frequency
        shell (a), a middle-frequency shell (b) and a high-frequency shell (c).
        While preconditioning is not required at low resolution, as seen in (a), 
        its effect becomes increasingly apparent at higher resolution. Note that, 
        since in these experiments the projection matrices use trilinear interpolation,
        the Hessian matrix $H$ is not diagonal, and yet the diagonal preconditioner
        is effective. 
    } 
    \label{fig:fsc epoch}
\end{figure}

\begin{figure}
    \centering
    \begin{subfigure}[b]{0.3285\textwidth}
        \centering
        \includegraphics[width=\textwidth]{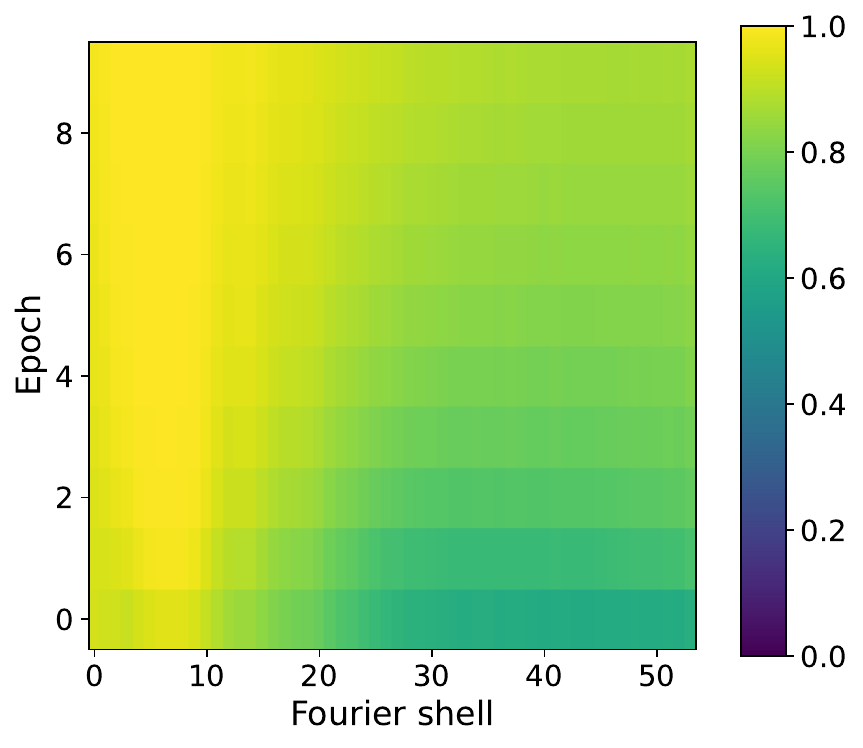}
        \caption{No preconditioner}
        \label{fig:fsc shell epoch sgd}
    \end{subfigure}
    \hfill
    \begin{subfigure}[b]{0.3285\textwidth}
        \centering
        \includegraphics[width=\textwidth]{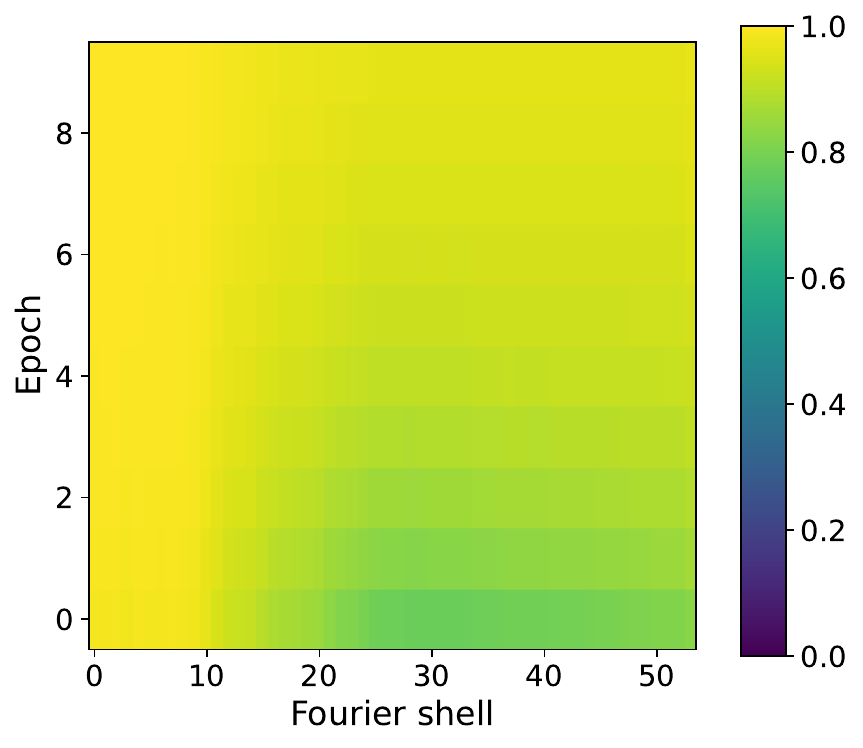}
        \caption{Precomputed preconditioner} 
        \label{fig:fsc shell epoch sgd p}
    \end{subfigure}
    \hfill
    \begin{subfigure}[b]{0.3285\textwidth}
        \centering
        \includegraphics[width=\textwidth]{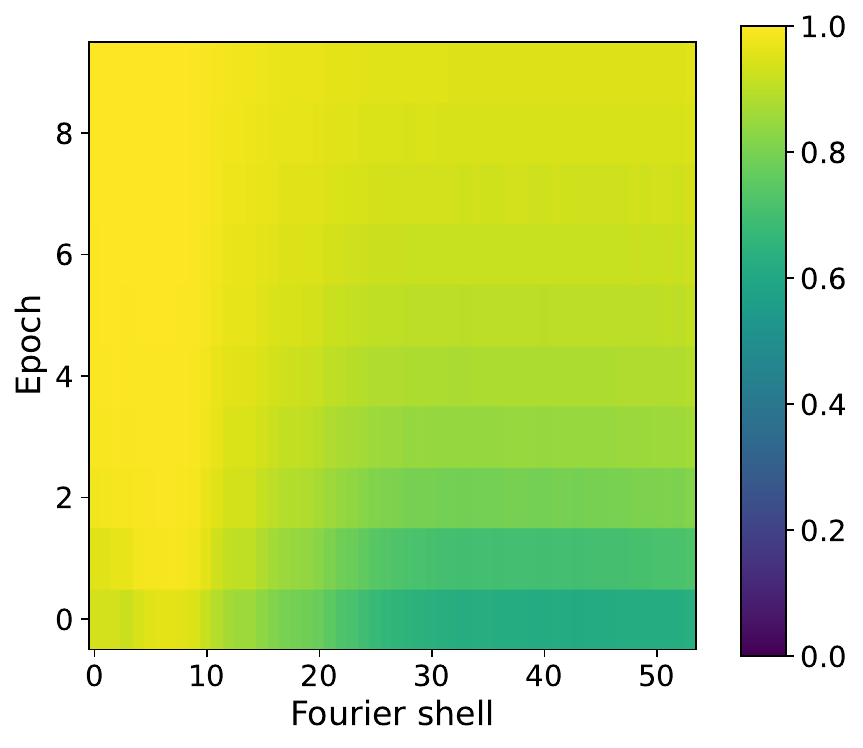}
        \caption{Algorithm~\ref{alg:precon sgd}}
        \label{fig:fsc shell epoch sgd o}
    \end{subfigure}
    \caption{FSC for individual Fourier shells as a function of the Fourier shell 
        index and epoch number for SGD with no preconditioner (a), SGD with a 
        precomputed preconditioner (b) and SGD with an estimated preconditioner (c), 
        where the projection operation is implemented using trilinear interpolation.} 
    \label{fig:fsc shell epoch}
\end{figure}

\begin{figure}
    \centering
    \begin{tabular}{cccc}
        \makecell{Ground truth \\(L-BFGS)} & 
        \makecell{No preconditioner} & 
        \makecell{Precomputed \\ preconditioner} & 
        \makecell{Algorithm~\ref{alg:precon sgd}} \\
        \hline

        \includegraphics[width=0.2\textwidth]{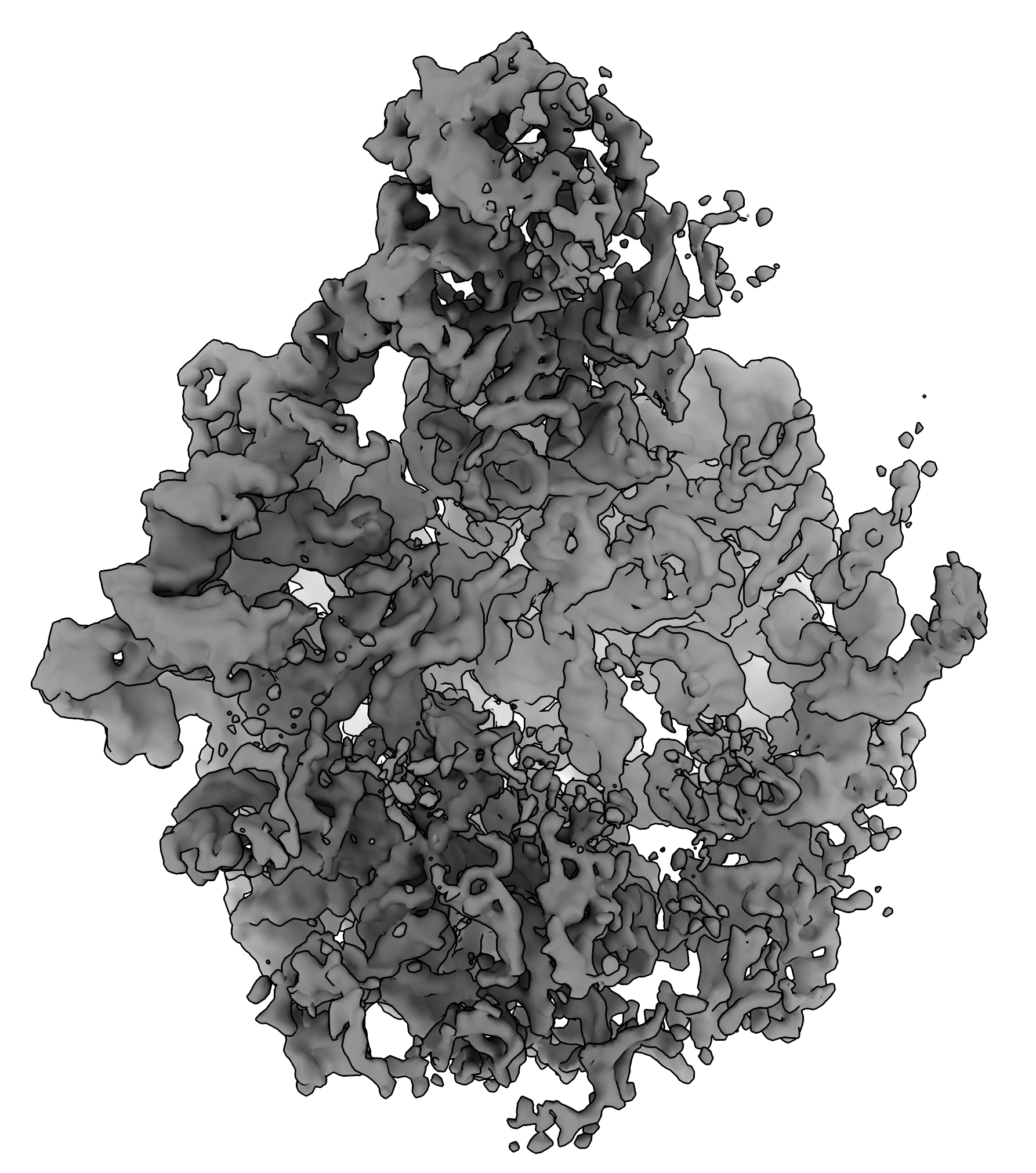} &
        \includegraphics[width=0.2\textwidth]{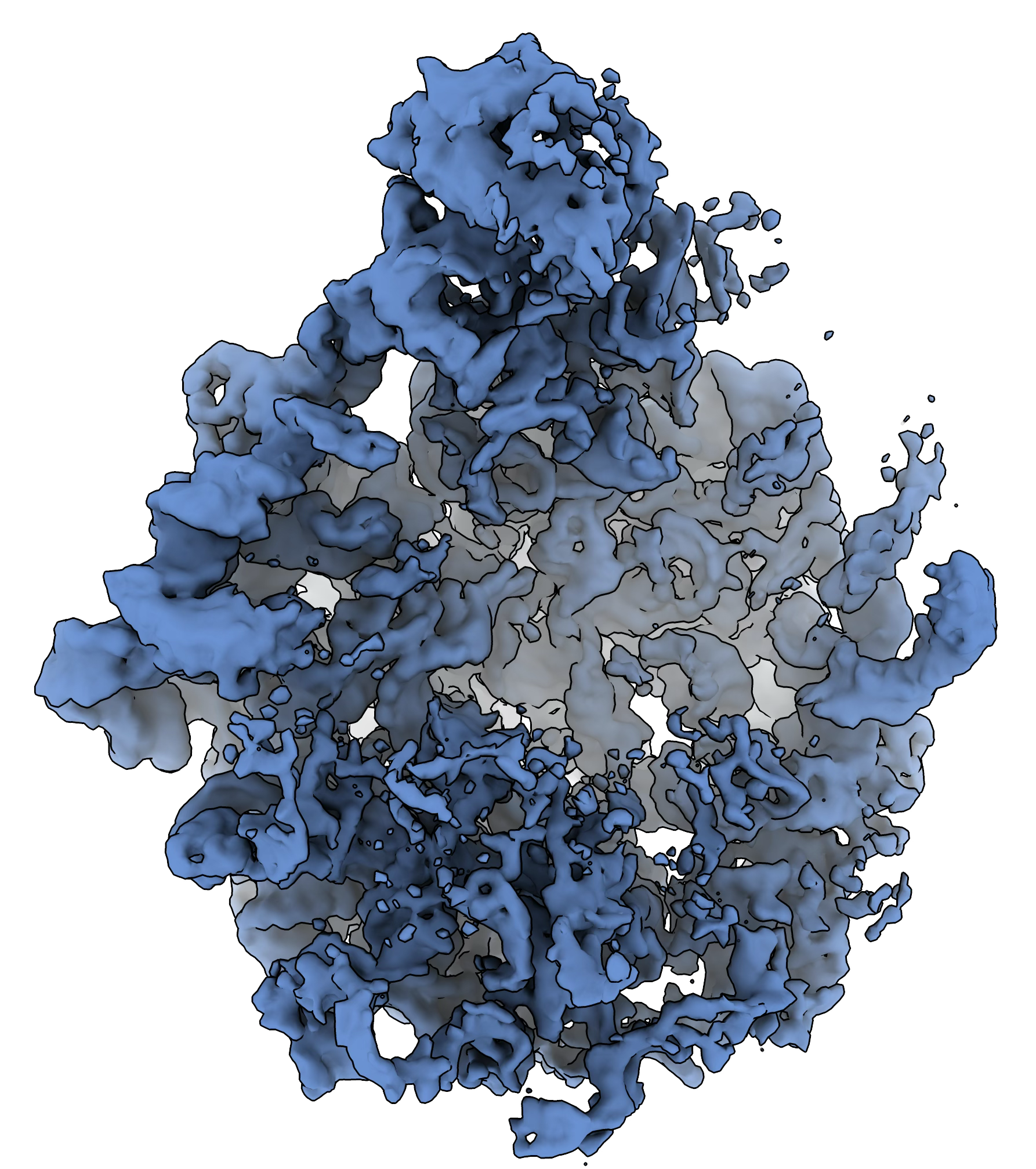} &
        \includegraphics[width=0.2\textwidth]{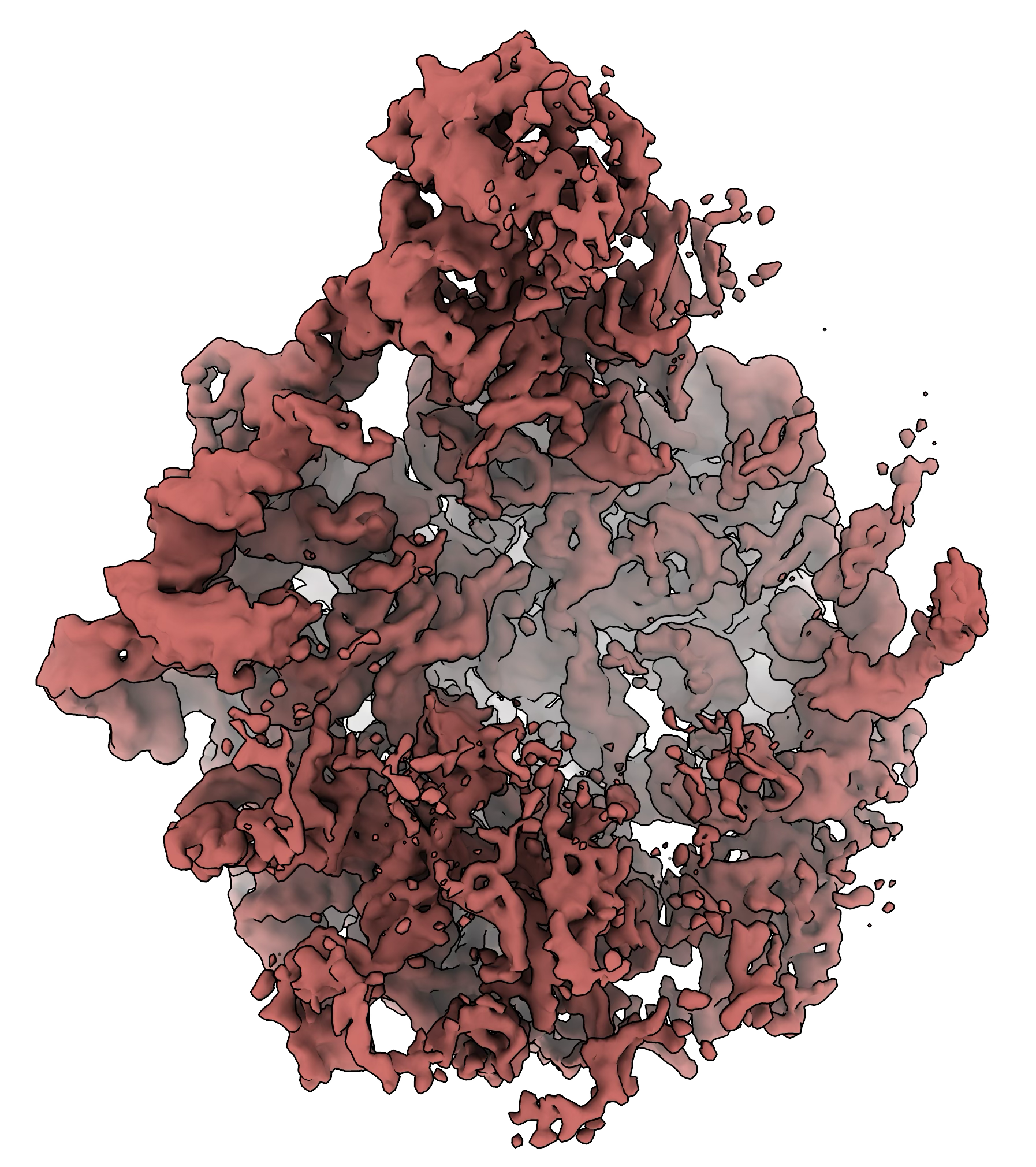} &
        \includegraphics[width=0.2\textwidth]{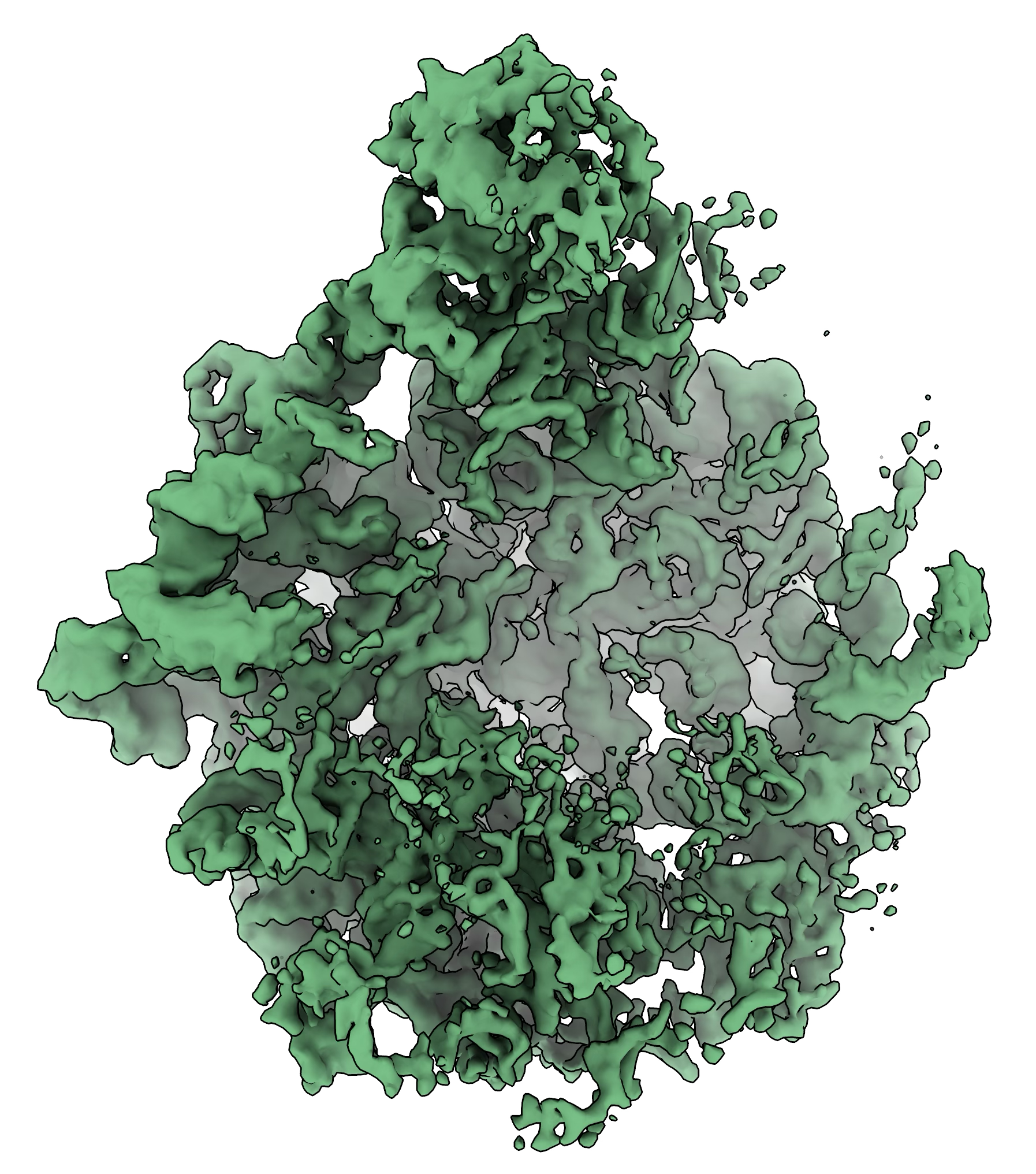} \\

        \includegraphics[width=0.2\textwidth]{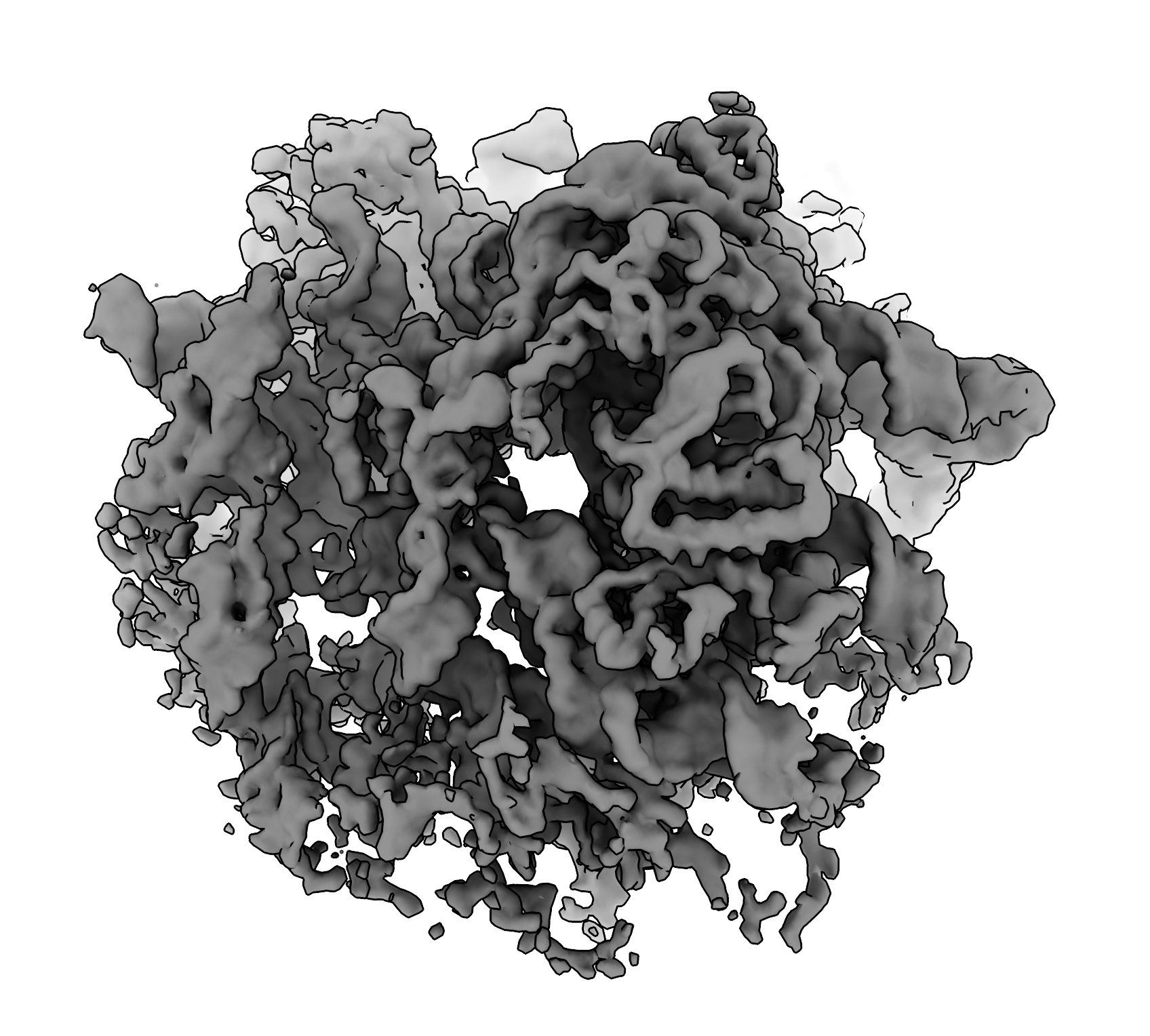} &
        \includegraphics[width=0.2\textwidth]{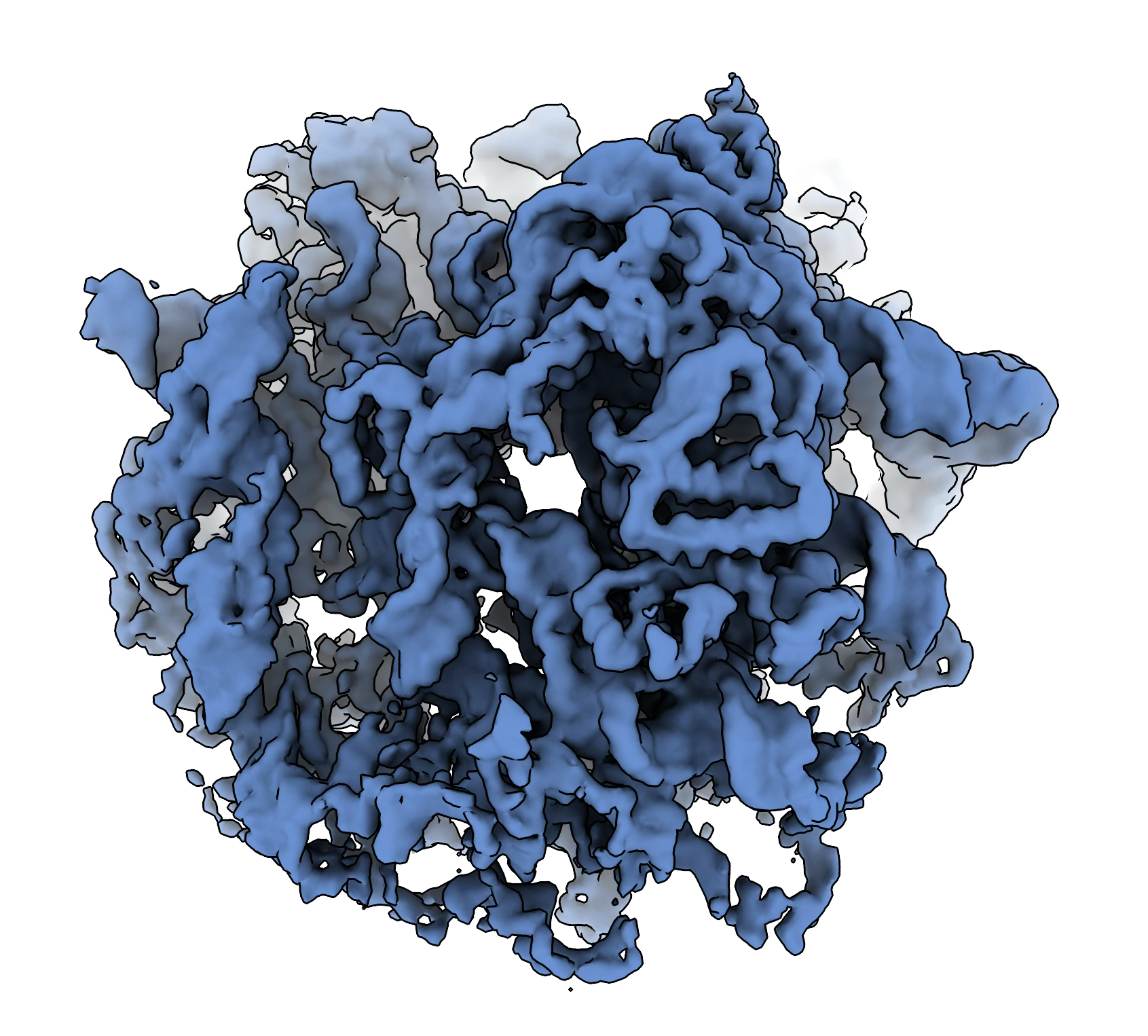} &
        \includegraphics[width=0.2\textwidth]{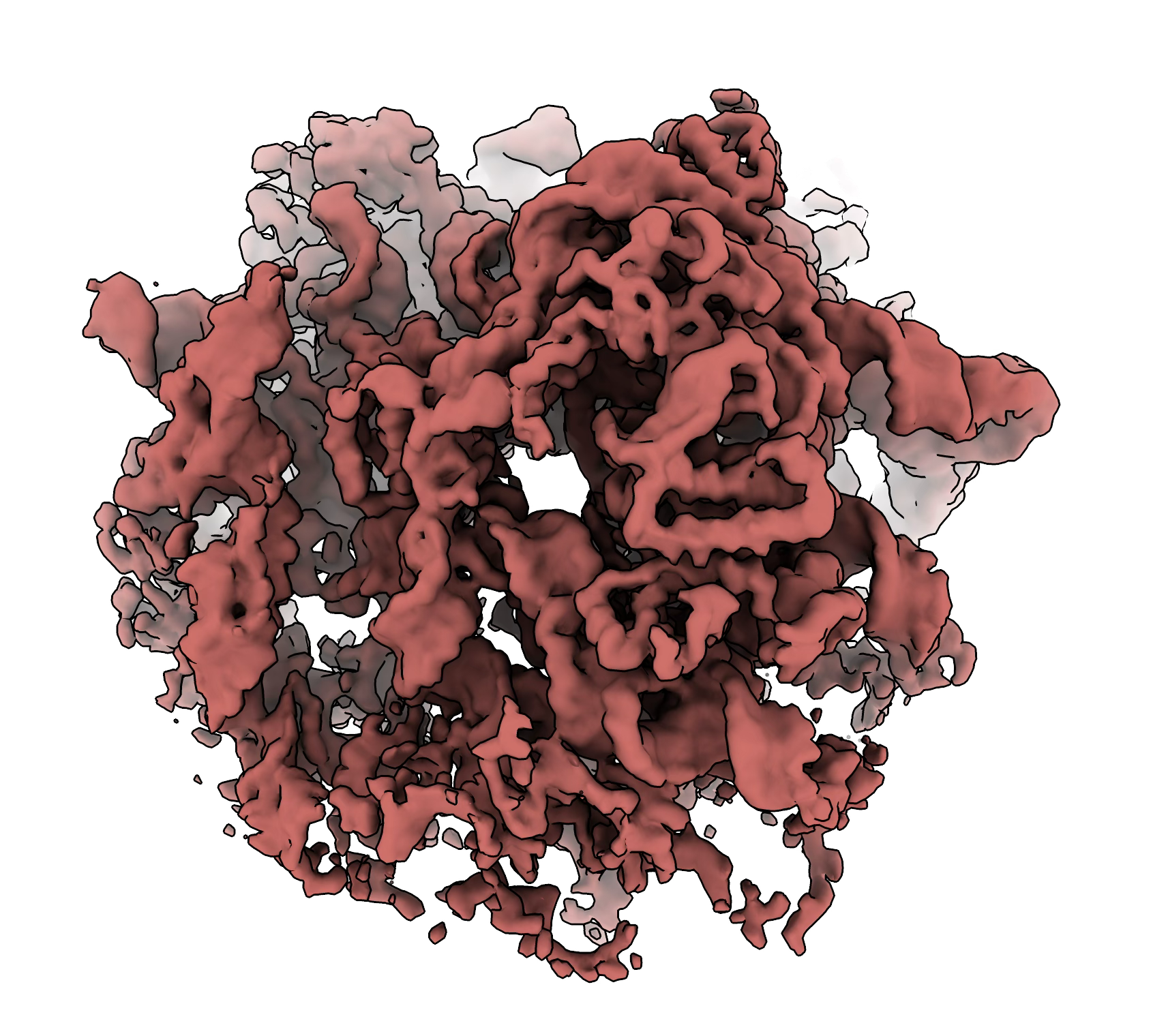} &
        \includegraphics[width=0.2\textwidth]{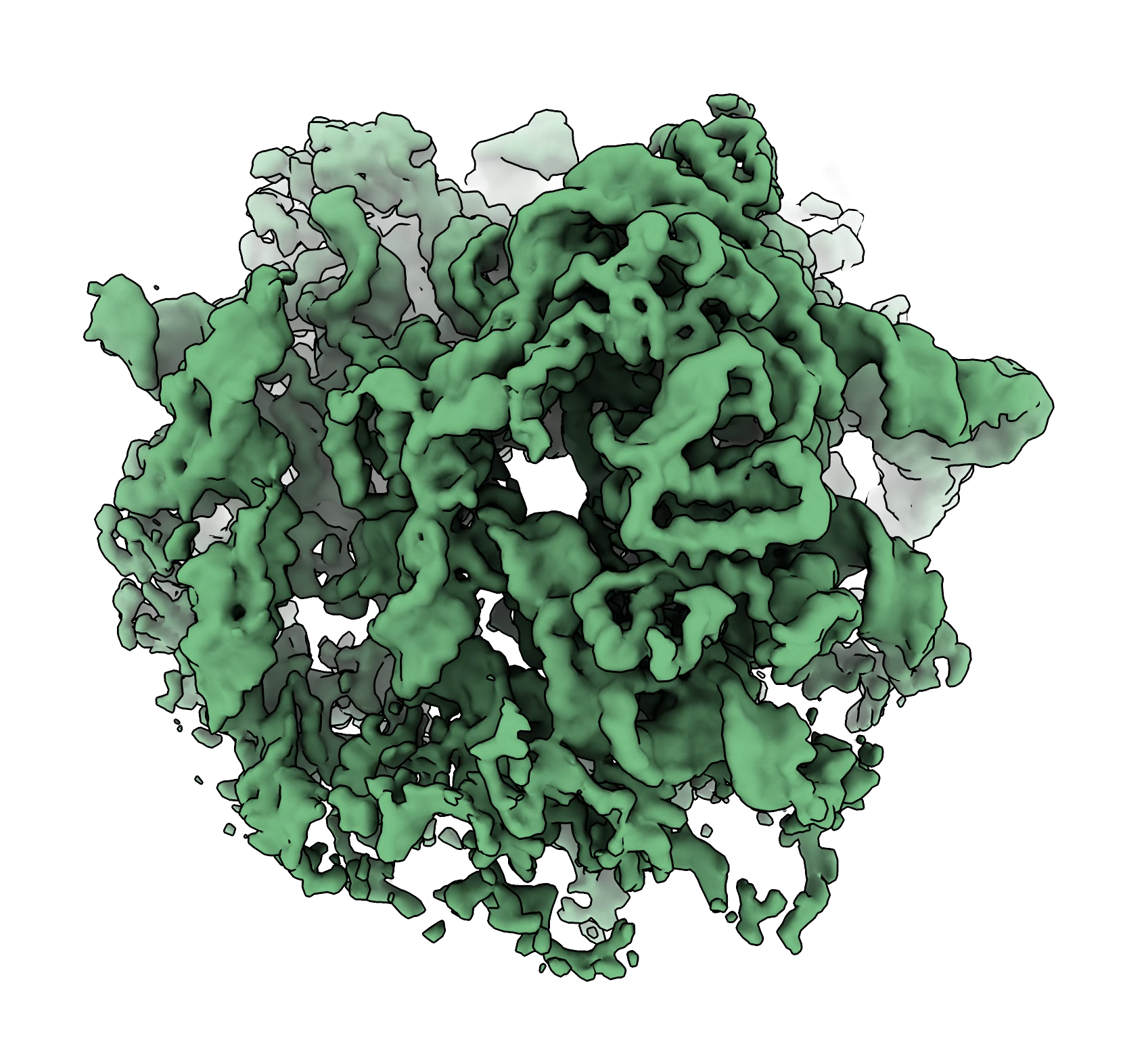} \\

        \includegraphics[width=0.2\textwidth]{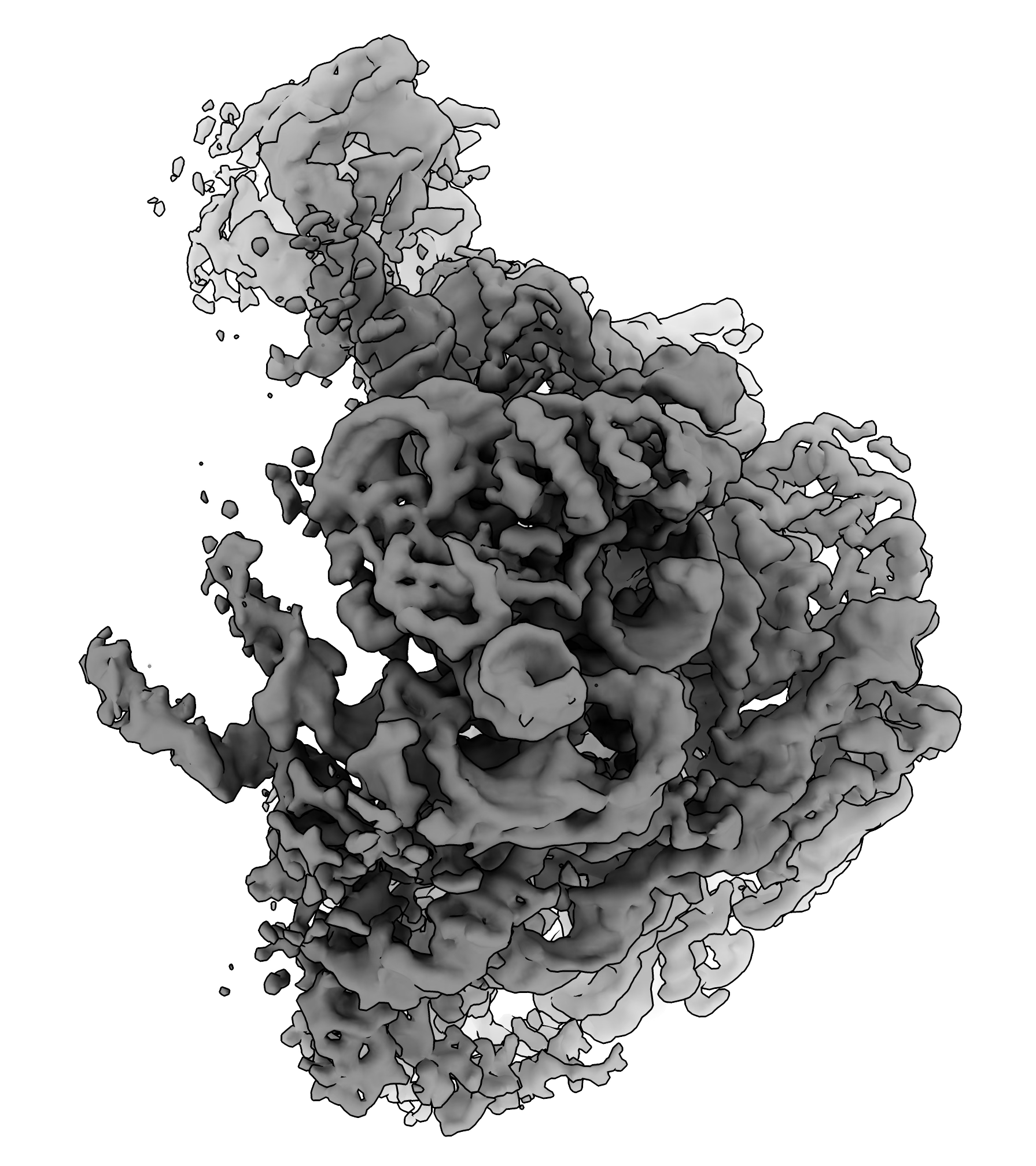} &
        \includegraphics[width=0.2\textwidth]{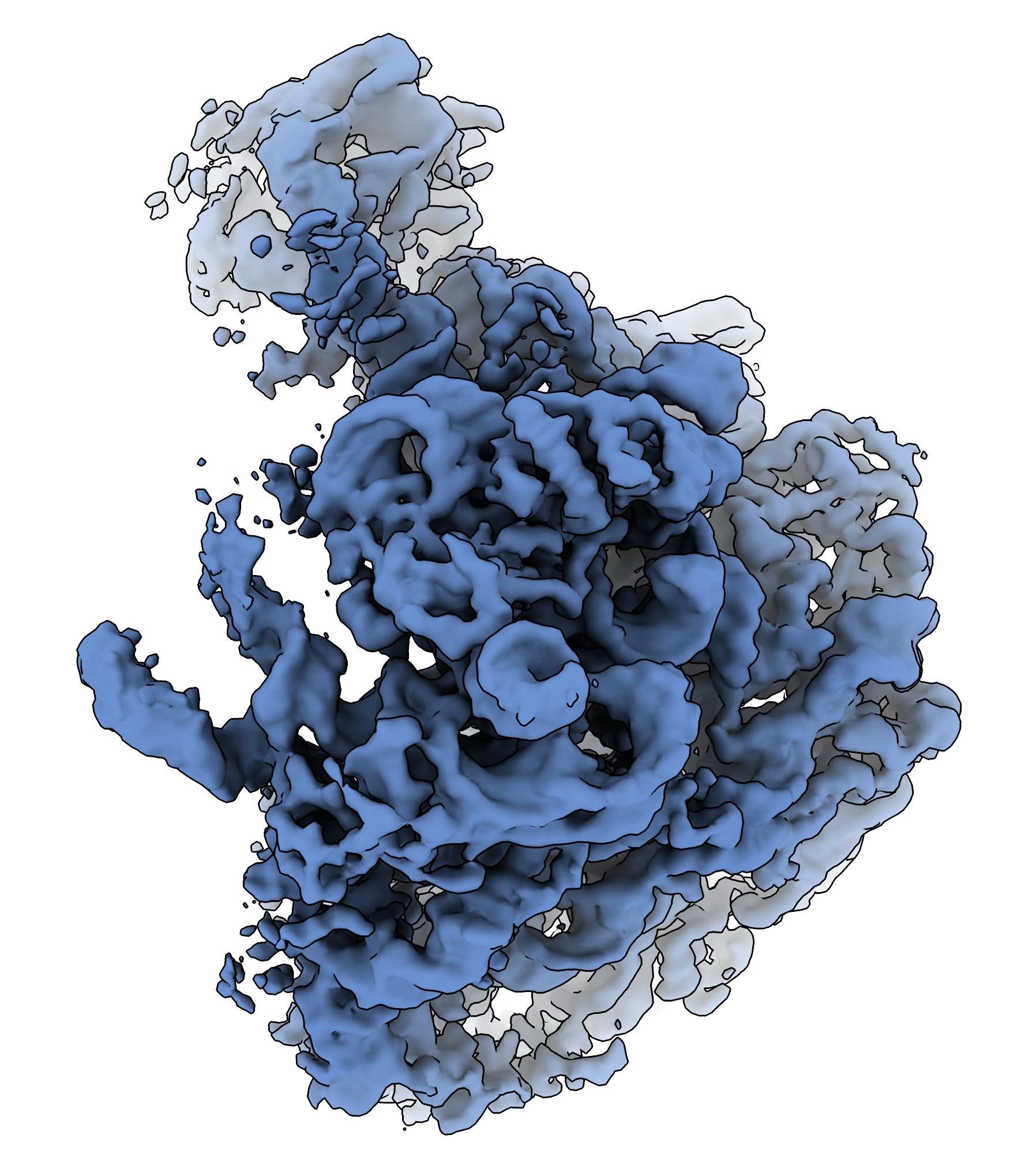} &
        \includegraphics[width=0.2\textwidth]{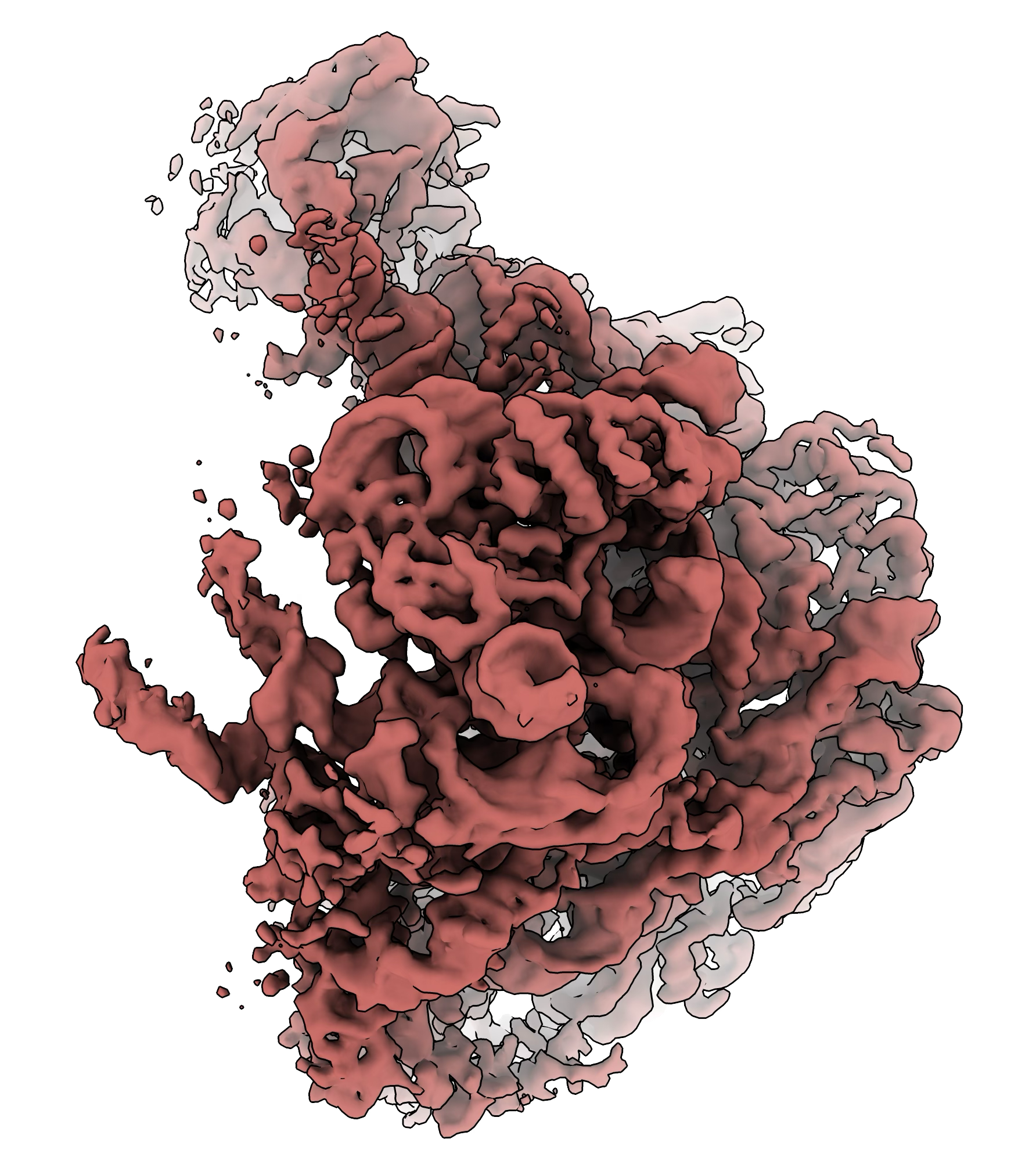} &
        \includegraphics[width=0.2\textwidth]{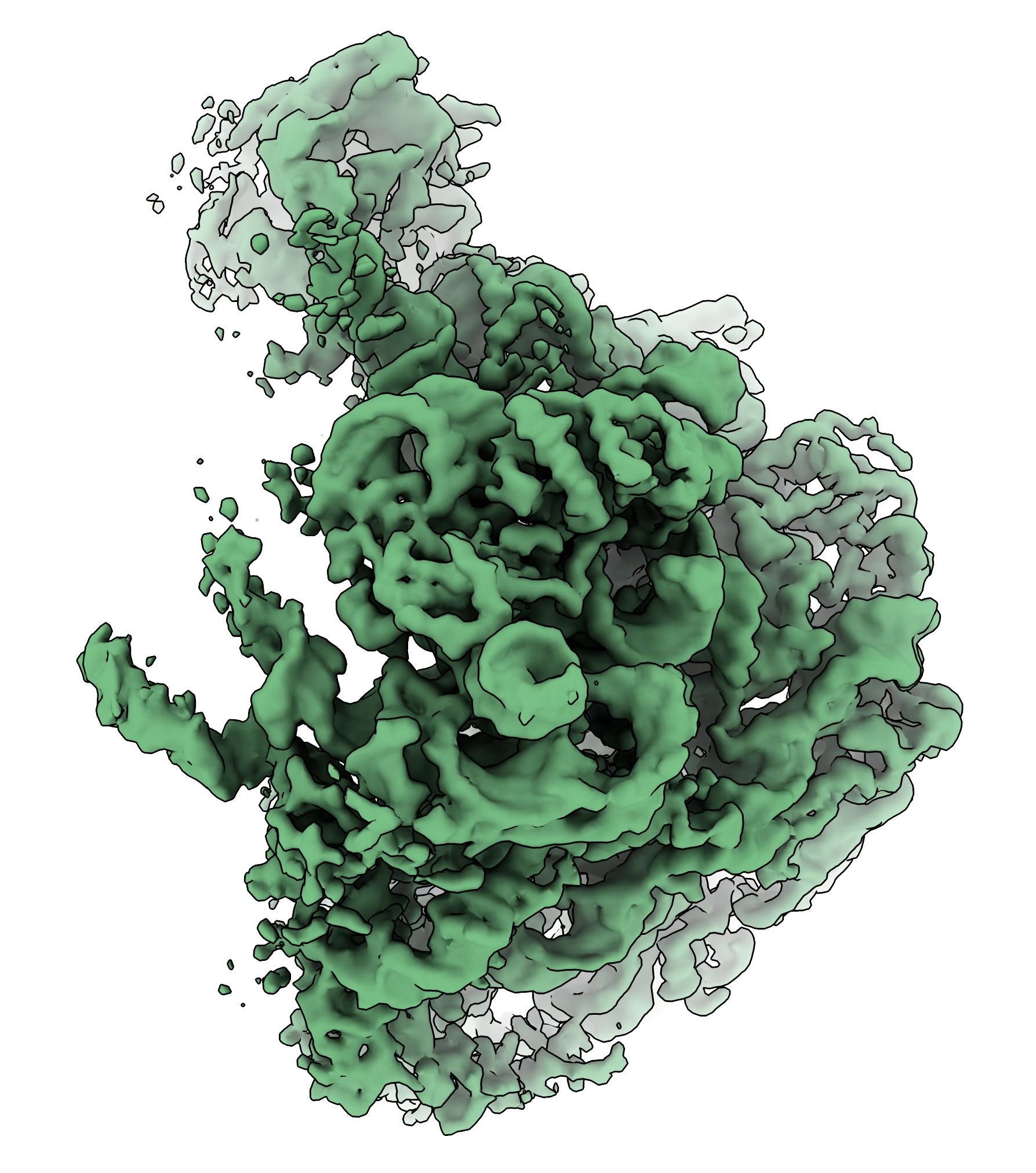} \\

        \includegraphics[width=0.2\textwidth]{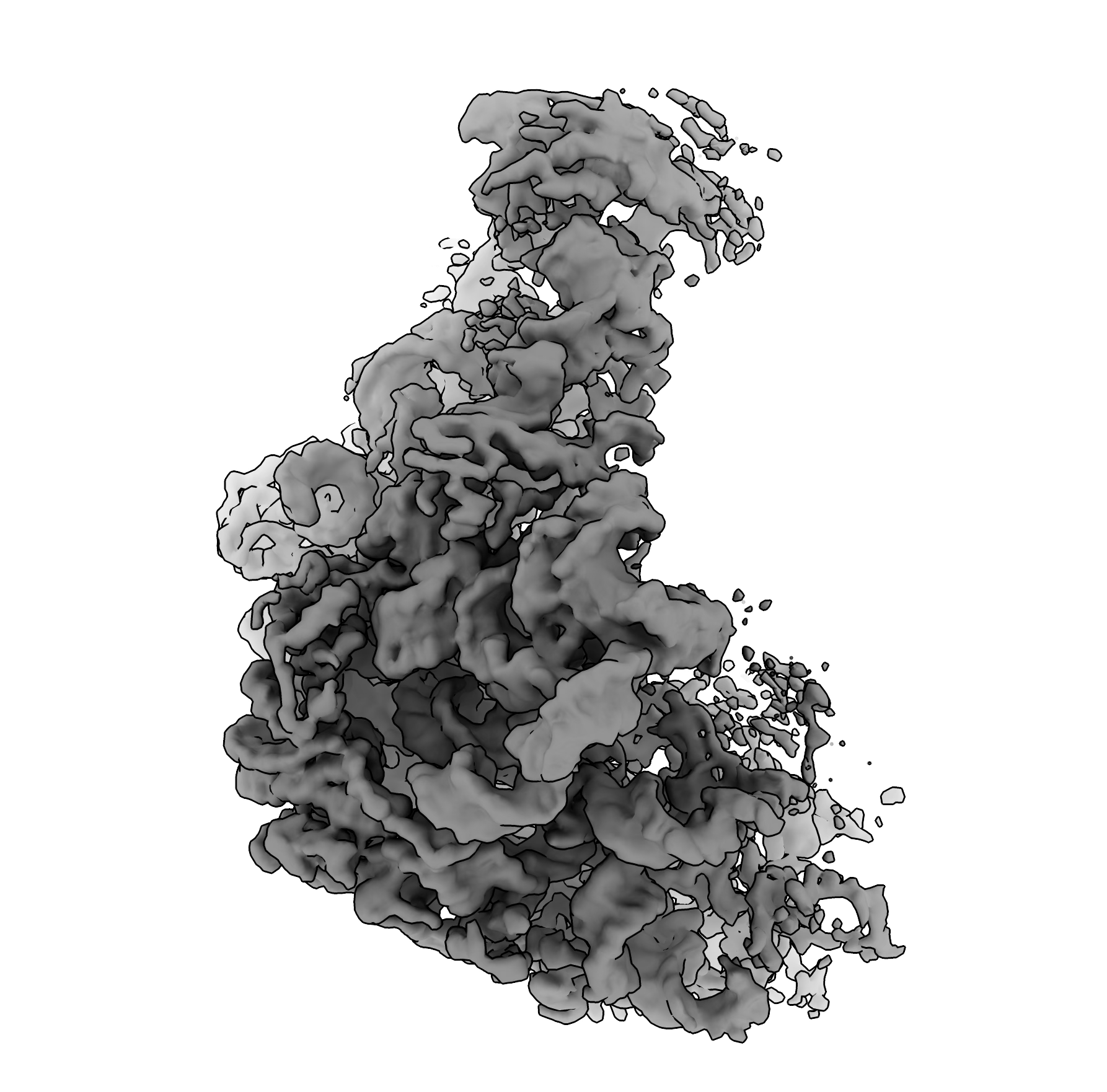} &
        \includegraphics[width=0.2\textwidth]{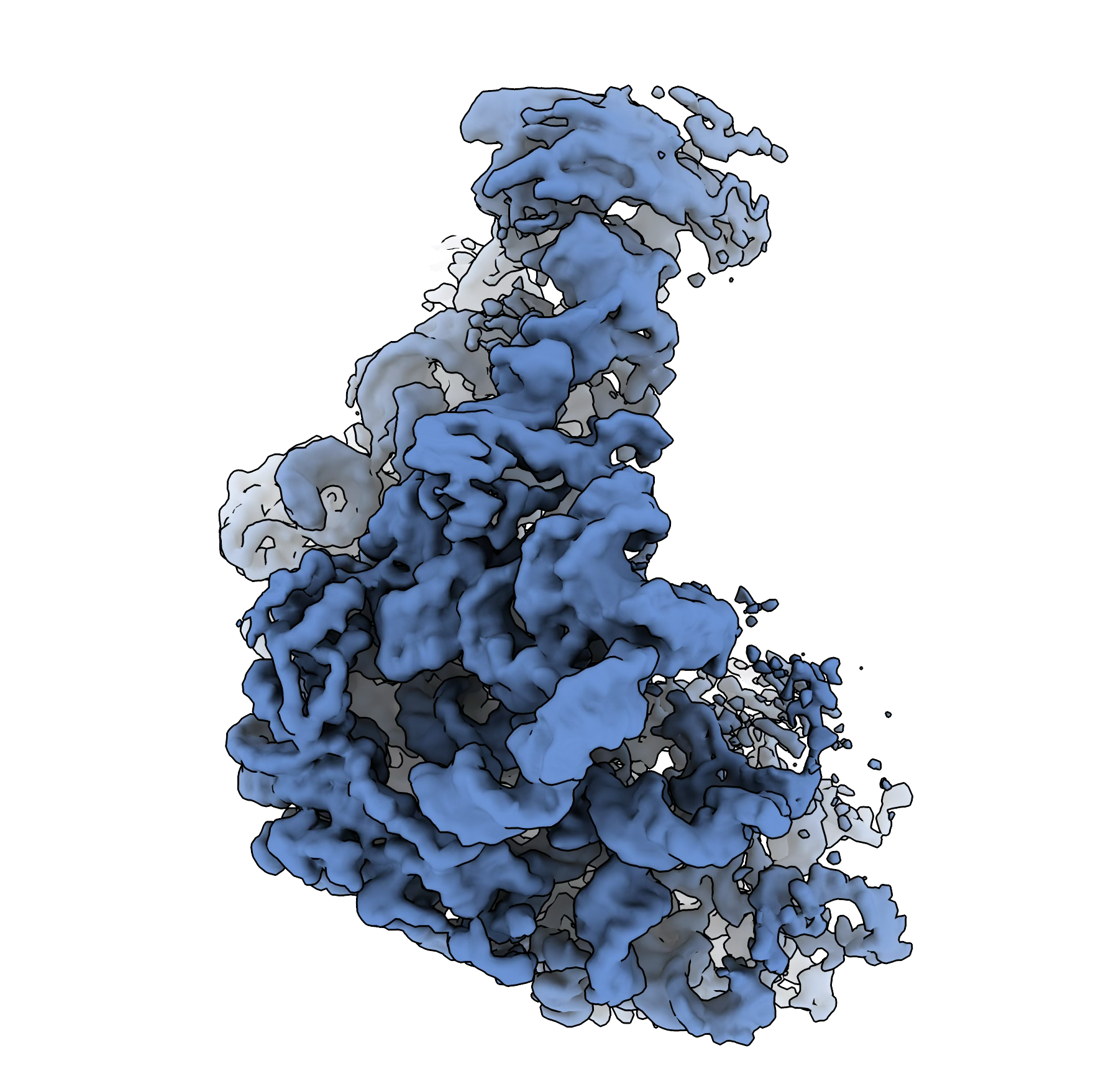} &
        \includegraphics[width=0.2\textwidth]{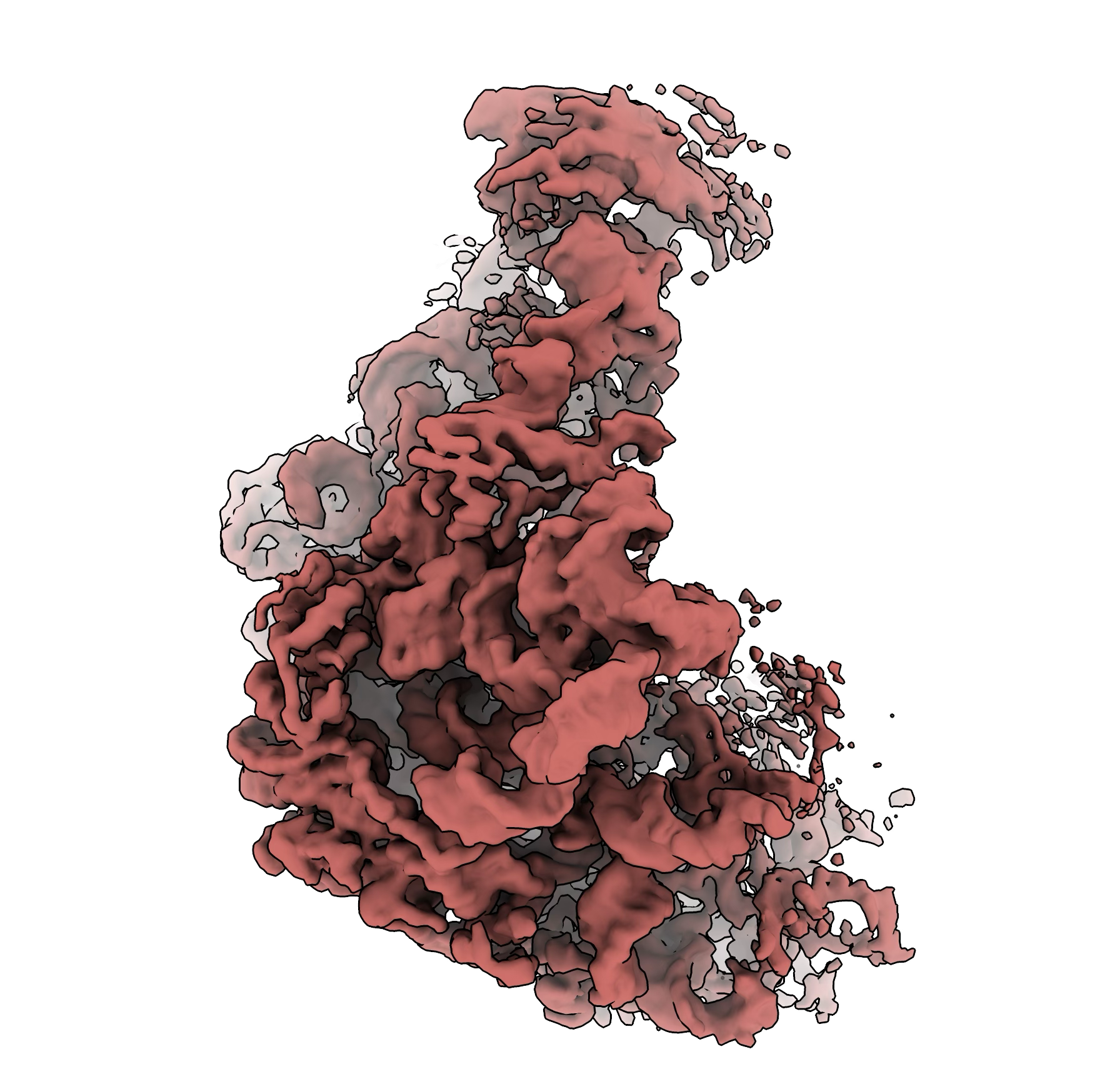} &
        \includegraphics[width=0.2\textwidth]{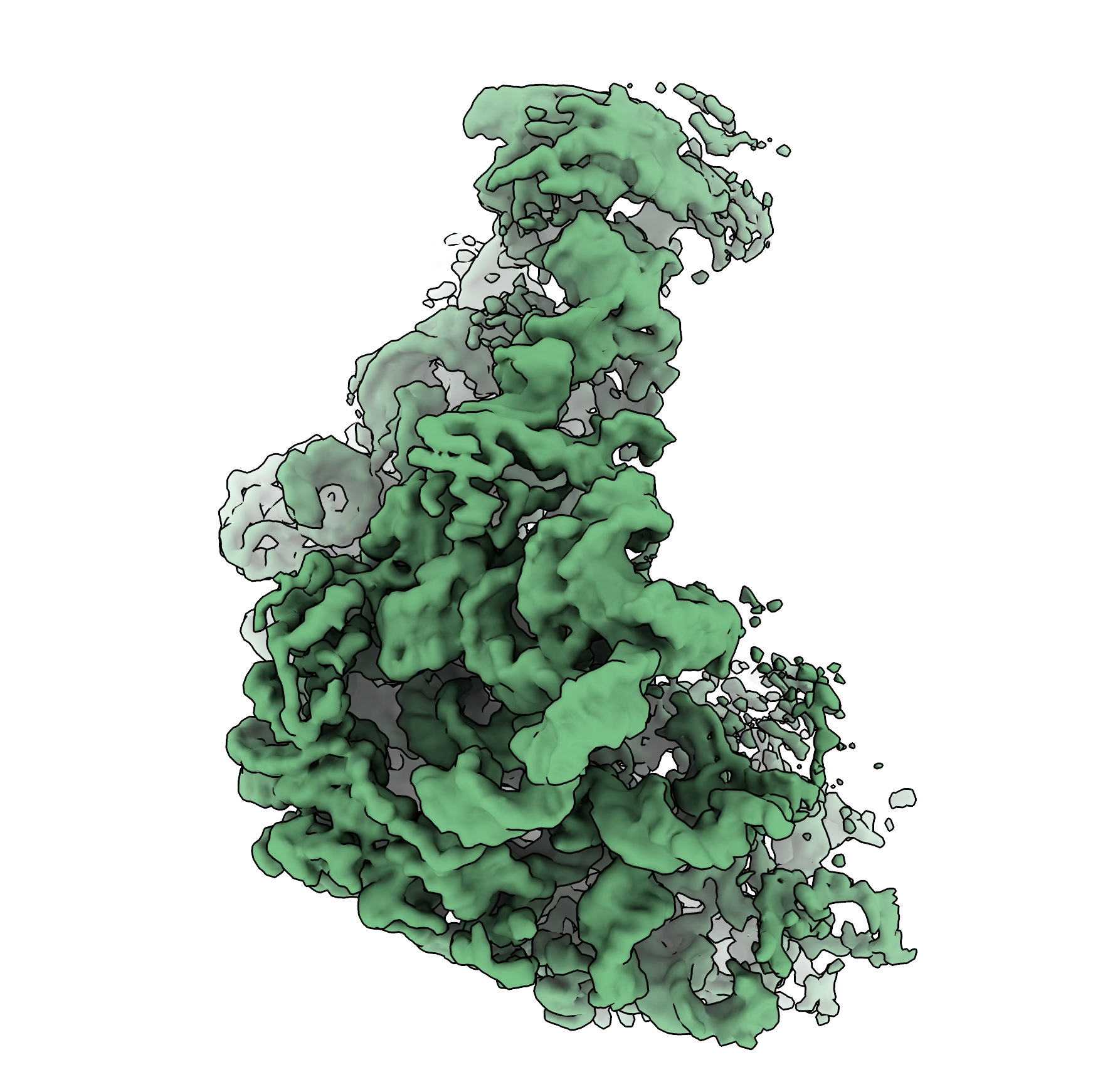} \\
    \end{tabular}
    \caption{A qualitative comparison of the reconstructions from the three SGD
        algorithms evaluated in Section~\ref{sec:experiments} (columns 2-4) and the ground truth 
        solution obtained using L-BFGS (first column), where each row shows the same view for all four volumes.
        Each volume is displayed as the isosurface at six standard deviations above its mean voxel value.
    }
    \label{fig:maps}
\end{figure}

\begin{figure}
    \centering
    \begin{subfigure}[b]{0.32\textwidth}
        \centering
        \includegraphics[width=\textwidth]{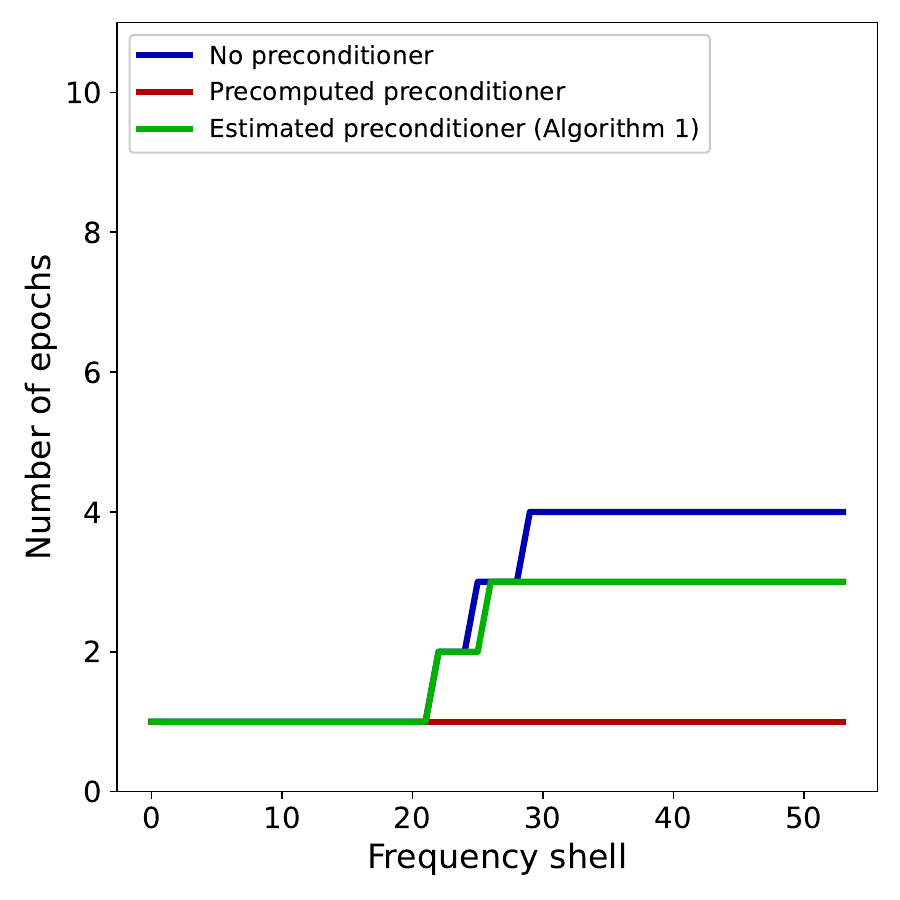}
        \caption{$\text{FSC} \geq 0.75$}
    \end{subfigure}
    \begin{subfigure}[b]{0.32\textwidth}
        \centering
        \includegraphics[width=\textwidth]{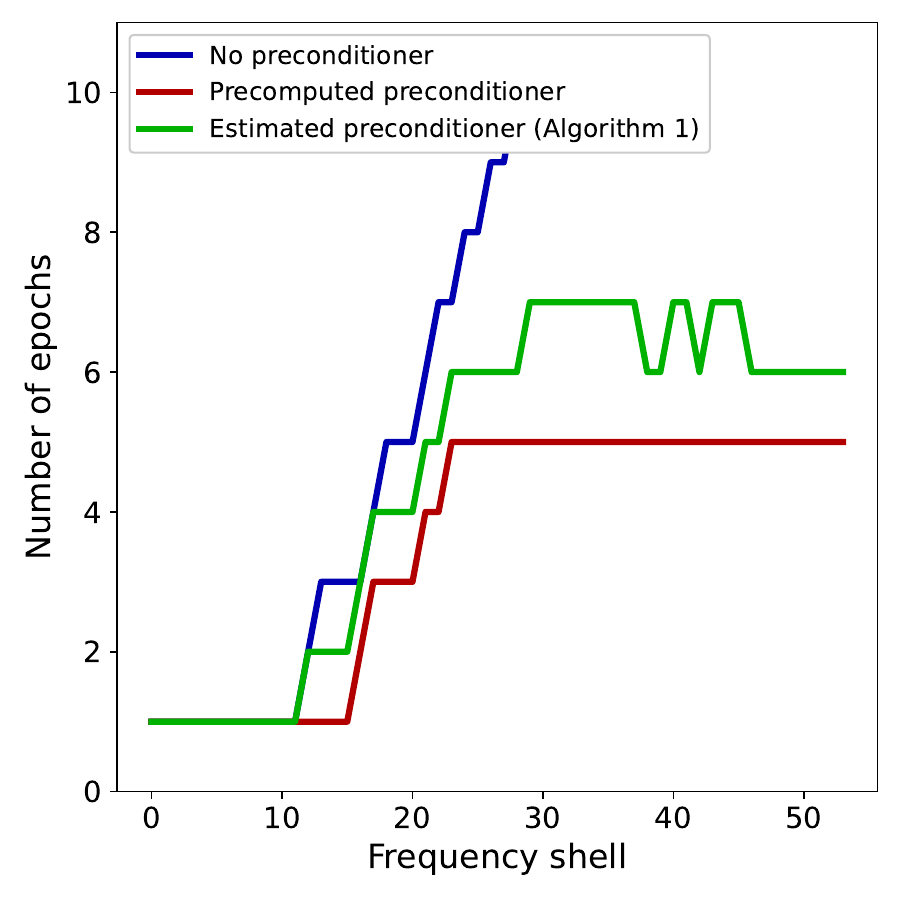}
        \caption{$\text{FSC} \geq 0.9$}
    \end{subfigure}
    \caption{The number of epochs after which the FSC with the ground truth, for 
        individual frequency shells, is greater than $\theta$ for SGD with no preconditioner
        (blue), SGD with a precomputed preconditioner (red) and SGD with an estimated
        preconditioner (green). (a) $\theta = 0.75$ and (b) $\theta=0.9$.}
    \label{fig:num iters real data}
\end{figure}


\section{Outlook and conclusion}
\label{sec:conclusion}

In this article, we analyzed the the conditioning of the cryo-EM reconstruction 
problem with a view towards applying stochastic gradient methods efficiently at 
high resolution. The proposed preconditioner construction and the numerical experiments
performed show promising results for high-resolution 3D refinement. 
While the analysis and experiments presented hold in the special case when the pose 
variables are known, this simplified setting captures the main difficulty of applying 
gradient-based algorithms at high resolution, namely the large condition number 
of the Hessian of the loss function due to the particular structure of the projection 
operator in the Fourier domain.

This proof-of-concept work shows the potential of the SGD algorithm for the more 
general cryo-EM reconstruction problem. There are a number of benefits that such 
an approach would provide:

\begin{enumerate}
    \item The main advantage is the improved convergence speed. While the EM 
        algorithm requires a full pass through the entire dataset at each iteration, 
        SGD methods only use a mini-batch of the particle images. By estimating
        a preconditioner during the reconstruction process, the convergence
        of SGD improves at high resolution, while also benefiting from the speed 
        of using mini-batches. In contrast, EM implicitly computes the same
        preconditioner, but at an increased computational cost due to requiring 
        the entire dataset at each iteration.

    \item A critical component of the current EM approaches is the $\ell_2$ regularizer,
        which makes the maximization step computationally tractable.
        SGD, on the other hand, is compatible with other regularization methods, 
        and one could take advantage, for example, of the Wilson 
        prior~\cite{gilles_molecular_2022} and learned regularization 
        methods~\cite{kimanius_exploiting_2021,kamilov_plug-and-play_2023,kimanius_data-driven_2024}. 
    
    \item While the numerical experiments presented here illustrate the performance
        of the estimated preconditioner with a simple SGD implementation, the 
        preconditioner is compatible with existing and more sophisticated stochastic
        gradient methods used in established cryo-EM software. Unifying the steps 
        of the ab initio reconstruction and high-resolution refinement using a 
        single algorithm is not only more consistent conceptually, but also a 
        practical improvement, allowing a more streamlined implementation.
\end{enumerate}

We defer to future work the analysis and the preconditioner construction in the 
general case, where the pose variables are not known. The main difficulty in the 
general case over the setting of our analysis is that, when marginalizing over 
the unknown poses (see~\eqref{eq:likelihood}), the objective function is no longer 
quadratic. Therefore, the Hessian depends on the current volume iterate. 
However, it is expected that the pose prior distributions are already narrow at 
high resolution and do not vary considerably from one epoch to another. 
Moreover, at each iteration, only a subset of the pose variables are updated (those 
corresponding to the particle images in the current mini-batch), and existing 
efficient pose sampling techniques can be used.
Therefore, an approach to estimate the preconditioner in the general case based on
similar ideas to the ones presented in this article is a promising future direction.

\newpage

\section{Acknowledgments}

The numerical experiments in their final form have been performed using the computational infrastructure at the MRC LMB; therefore, BT thanks Jake Grimmett, Toby Darling and Ivan Clayson of LMB Scientific Computing for computing support.

This work was supported by the NIH under grant R01GM136780, the Air Force Office of Scientific Research under grant FA9550-21-1-0317, the Alfred P. Sloan Foundation under grant FG-2023-20853, DARPA/DOD under grant HR00112490485, and the Simons Foundation. 
This research was performed while BT was affiliated with Yale University, with part of it while visiting the Institute for Pure and Applied Mathematics (IPAM), which is supported by the National Science Foundation (Grant No. DMS-1925919).

\section{Data availability}

The code for reproducing the numerical experiments is available on GitHub at\newline
\url{https://github.com/bogdantoader/simplecryoem} and the particle metadata used for the numerical experiments as well as the output volumes and figures are available on Zenodo at\newline
\url{https://doi.org/10.5281/zenodo.14017756}.

\printbibliography

\end{document}